\theoremstyle{definition}
\newtheorem{theorem}{Theorem}
\newtheorem*{theorem*}{Theorem}
\numberwithin{theorem}{section}
\newtheorem{definition}[theorem]{Definiton}
\newtheorem{proposition}[theorem]{Proposition}
\newtheorem{lemma}[theorem]{Lemma}
\newtheorem{remark}[theorem]{Remark}
\newtheorem{cor}[theorem]{Corollary}
\DeclareMathOperator{\grad}{grad}
\DeclareMathOperator{\spanned}{span}
\DeclareMathOperator{\id}{id}
\DeclareMathOperator{\Sym}{Sym}
\DeclareMathOperator{\initial}{in}
\DeclareMathOperator{\Lie}{Lie}
\DeclareMathOperator{\wt}{wt}
\newcommand{\la}{\lambda}
\newcommand{\om}{\omega}
\newcommand{\mU}{\mathcal U}
\newcommand{\fg}{\mathfrak{g}}
\newcommand{\fsl}{\mathfrak{sl}}
\newcommand{\fn}{\mathfrak{n}}
\newcommand{\fh}{\mathfrak{h}}
\newcommand{\fb}{\mathfrak{b}}
\newcommand{\bC}{\mathbb{C}}
\newcommand{\bP}{\mathbb{P}}
\newcommand{\bZ}{\mathbb{Z}}
\DeclareMathSymbol{\lsb@l}{\mathalpha}{letters}{`l}
\title{PBW degenerate Schubert varieties: Cartan components and counterexamples}
\author{I. Makhlin}
\address{Skolkovo Institute of Science and Technology\\ 
Center for Advanced Studies\\
Bolshoy Boulevard 30, bld. 1\\
Moscow 121205\\
Russia}
\address{National Research University Higher School of Economics\\
International Laboratory of Representation Theory and Mathematical Physics\\
Ulitsa Usacheva 6\\Moscow 119048\\Russia\vspace{2ex}}
\email{imakhlin@mail.ru}
\thanks{The author would like to thank Lara Bossinger, Xin Fang, Evgeny Feigin, Ghislain Fourier and Ievgen Makedonskyi for helpful discussions of these subjects. The work was partially supported by the grant RSF 19-11-00056. This research was supported in part by Young Russian Mathematics award.}
\begin{document}

\maketitle

\begin{abstract}
In recent years PBW degenerations of Demazure modules and Schubert varieties were defined and studied in several papers. Various interesting properties (such as these PBW degenerations embedding naturally into the corresponding degenerate representations and flag varieties) were obtained in type $\mathrm A$ but only with restrictions on the Weyl group element or the highest weight. We show that these properties cannot hold in full generality due to the following issue with the definition. The degenerate variety depends on the highest weight used to define it and not only on its Weyl group stabilizer (as is the case for PBW degenerate flag varieties as well as classical Schubert varieties). Perhaps surprisingly, the minimal counterexamples appear only for $\fsl_6$. The counterexamples are constructed with the help of a study of the Cartan components appearing in this context.
\end{abstract}

\section*{Introduction}

Over the last decade PBW degenerations of representations and flag varieties proved to be a diverse and fruitful research topic (\cite{Fe,FFL1,ABS,FFL2,CFR,CL} and many others). Let us briefly outline the definitions of these objects.

Consider a semisimple Lie algebra $\fg$ and the subalgebra $\fn_-\subset\fg$ spanned by negative root vectors, an integral dominant $\fg$-weight $\la$ and the irreducible representation $L_\la$. The PBW filtration on $\mU(\fn_-)$ defines a filtration on $L_\la$ via action on the highest weight vector, the associated graded space $L_\la^a$ is the PBW degenerate representation (of the abelian Lie algebra $\fn_-^a$). The space $L_\la^a$ is then seen to be acted upon by the abelian Lie group $N_-^a=\bC^{\dim\fn_-}$, the closure $F_\la^a$ of the $N_-^a$-orbit of the highest weight point in $\bP(L_\la^a)$ is the corresponding PBW degenerate flag variety. 

Let us restrict our attention to algebras $\fg$ of type $\mathrm A$. Here, a fundamental property of the degenerate flag varieties which makes them especially interesting is that they depend on the highest weight $\la$ the same way as flag varieties do themselves. Let $\la=a_1\om_1+\ldots+a_{n-1}\om_{n-1}$ for fundamental weights $\om_i$, then $F_\la^a$ is determined by the Weyl group stabilizer of $\la$, i.e.\ the set of such $i$ that $a_i>0$. This was first proved in~\cite{Fe} by constructing a degenerate Pl\"ucker embedding of sorts for $F^a_\la$. A highly important fact, from which the Pl\"ucker embedding is obtained, is that $L_\la^a$ can be realized as the cyclic submodule generated by the highest weight vector in \[(L_{\omega_1}^a)^{\otimes a_1}\otimes\ldots\otimes (L_{\omega_{n-1}}^a)^{\otimes a_{n-1}},\] i.e.\ the so-called Cartan component. 

Now, a very natural generalization here seems to be the transition from irreducible representations and flag varieties to Demazure modules and Schubert varieties. For a Weyl group element $w$ the Demazure module $D_{w\la}$ is generated by a lowest weight vector. This allows us to define its PBW degeneration $D_{w\la}^a$ and the orbit closure $X_{w\la}^a\subset\bP(D_{w\la}^a)$ in analogy with the above. 

These objects were, apparently, first considered in~\cite{fourier} where many of the key properties of $L_\la^a$ and $F_\la^a$ were generalized to the case of a \emph{triangular} $w$, i.e.\ a permutation avoiding the patterns $4231$ and $2413$. It was shown that for such $w$ the variety $X_{w\la}^a$ admits a degenerate Pl\"ucker embedding and, consequently, depends on $\la$ the same way the Schubert (or flag) variety does. The mentioned Cartan component property was also generalized to this case and a generalization of the FFLV basis (\cite{FFL1}) was constructed. Let us point out that, in a sense, the underlying cause for this neat picture is the fact that the PBW filtration on $D_{w\la}$ can be induced from the one on $L_\la$. This realizes $D_{w\la}^a$ as the a cyclic submodule in $L_\la^a$ over a certain subalgebra in $\fn_-^a$ and also realizes $X_{w\la}^a$ as an orbit closure in $F_\la^a$ for certain subgroup in $N_-^a$ (see also Remark~\ref{birational}).

In the subsequent paper~\cite{BF} similar properties are obtained in the case of a minuscule highest weight and arbitrary $w$. The most recent paper~\cite{CFF} on this subject proved that when $w$ is \emph{rectangular} (a subclass of the triangular elements), $D_{w\la}^a$ is itself a Demazure module and $X_{w\la}^a$ is itself a Schubert variety for some algebra $\fsl_m$ of higher rank. This generalizes analogous properties of $L_\la^a$ and $F_\la^a$ proved in~\cite{CL}. 

Little, however, has been proved concerning type $\mathrm A$ PBW degenerate Demazure modules and Schubert varieties of full generality. Nevertheless, some of the experts in this field expected the degenerate Pl\"ucker embedding or, at least, the mentioned independence of $\la$ to generalize to the case of an arbitrary $w$ (as was communicated to the author during some private and public discussions). We show that this is not the case proving that neither of these results hold in full generality. Perhaps the most surprising aspect here is that the minimal counterexamples appear only for $\fsl_6$ which makes spotting them a challenge even with the use of a computer. As discussed in Remark~\ref{n45}, for $n\le 5$ the Cartan component property has been verified for all $w$ which implies both the Pl\"ucker embedding and the independence of $\la$. This circumstance lead the author to put a considerable amount of effort into proving the same for greater $n$ only to discover said counterexamples after gradually optimizing his search algorithms.

Prior to presenting the counterexamples in Section~\ref{counterex} we provide a study of the Cartan component $E_{w\la}$ and the orbit closure subvariety in $\bP(E_{w\la})$. It turns out that these objects are somewhat easier to control than $D_{w\la}^a$ and $X_{w\la}^a$ but can be tied to the latter via a homomorphism between $D_{w\la}^a$ and $E_{w\la}$. These results are then applied to the construction of counterexamples. It should be pointed out that the proofs in Section~\ref{counterex} require some computer assistance (the Demazure modules in question have dimensions 2942 and 8226). This, however, is kept to a bare minimum, i.e.\ finding Minkowski sums of certain small sets: a computation which is easily reproducible.

\section{Preliminaries}

We first recall the definitions and key properties of the more conventional abelian PBW degenerations of $\fsl_n$-representations and flag varieties and then define the analogous objects for Demazure modules and Schubert varieties.

\subsection{PBW degenerate representations and flag varieties}\label{abelianPBW}

For $n\ge 2$ consider the Lie algebra $\fg=\fsl_n(\bC)$ with a fixed triangular decomposition $\fg=\fn_-\oplus\fh\oplus\fn_+$. For $1\le k\le n-1$ denote the simple roots $\alpha_k\in\fh^*$  and let $\om_k\in\fh^*$ be the corresponding fundamental weights. The set of positive roots $\Phi^+$ is comprised of \[\alpha_{i,j}=\alpha_i+\ldots+\alpha_{j-1}\] for $1\le i<j\le n-1$ (the set of negative roots is denoted $\Phi^-$). Let $\fn_-$ be spanned by negative root vectors $f_{i,j}$ with weight $-\alpha_{i,j}$ and $\fn_+$ be spanned by positive root vectors $e_{i,j}$ with weight $\alpha_{i,j}$. Our basis of choice in $\fh^*$ will be the set of fundamental weights, i.e.\ $(a_1,\ldots,a_{n-1})$ will denote the weight $a_1\om_1+\ldots+a_{n-1}\om_{n-1}$. The semigroup of integral dominant weights (those with coordinates in $\bZ_{\ge 0}$) will be denoted $\Pi^+$.

The universal enveloping algebra $\mU(\fn_-)$ is equipped with a $\bZ_{\ge 0}$-filtration (the PBW filtration) with components \[\mU(\fn_-)_{\le m}=\spanned(f_{i_1,j_1}\ldots f_{i_N,j_N}|N\le m).\] The associated $\bZ_{\ge 0}$-graded algebra $\mU^a$ is a polynomial algebra in $n\choose 2$ variables $f_{i,j}^a$ (the images of $f_{i,j}$ in $\mU(\fn_-)_{\le 1}/\mU(\fn_-)_{\le 0}$). The algebra $\mU^a$ can be viewed as $\mU(\fn_-^a)$ for an abelian Lie algebra $\fn_-^a$ spanned by the $f_{i,j}^a$.

For $\la\in\Pi^+$ let $L_\la$ be the irreducible representation of $\fg$ with highest weight $\la$ and highest weight vector $v_\la$ (so that $\fn_+v_\la=0$). The PBW filtration induces a $\bZ_{\ge 0}$-filtration on every such $L_\la$ by $(L_\la)_{\le m}=\mU(\fn_-)_{\le m} v_\la$. The associated graded space $L_\la^a$ is naturally a $\mU^a$-module known as the (abelian) PBW degeneration of $L_\la$. This module is cyclic, generated by a vector $v_\la^a$ (the image of $v_\la$ in $(L_\la)_{\le 0}$).

Now consider the Lie group $G=SL_n(\bC)$ with $\Lie(G)=\fg$ and the nilpotent subgroup $N_-\subset G$ with $\Lie(N_-)=\fn_-$. Choose $\la=(a_1,\ldots,a_{n-1})\in\Pi^+$ with $d=d(\la)=(d_1,\ldots,d_l)$ the tuple of integers for which $a_{d_k}>0$. The tuple $d$ determines the parabolic subgroup $P\subset G$ preserving the line $\bC v_\la\subset L_\la$. This line provides a point $\bm v_\la$ in the projectivization $\bP(L_\la)$. The stabilizer $G_{\bm v_\la}$ is seen to be $P$, therefore the orbit closure $\overline{N_-\bm v_\la}$ is the partial flag variety $F_d=G/P$ (with the orbit itself being the largest Bruhat cell). 

Similarly, we may consider the connected simply connected Lie group $N_-^a$ with $\Lie(N_-^a)=\fn_-^a$ (this Lie group will simply be $\bC^{n\choose2}$ under addition). The group $N_-^a$ acts on $L_\la^a$ and $\bP(L_\la^a)$. We have $\bm v_\la^a\in\bP(L_\la^a)$ corresponding to $\bC v_\la^a$ and we define $F_d^a=\overline{N_-\bm v_\la^a}$ --- the (abelian) PBW degeneration of $F_d$. The fact that $F_d^a$ depends only on $d$ and not on $\la$ itself is a consequence of Theorem~\ref{degenplucker} (the degenerate Pl\"ucker embedding) which we now state.

Let the $n$-dimensional complex space $V$ be the standard representation of $\fg=\fsl_n$ with basis $b_1,\ldots,b_n$. The irreducible representations with fundamental highest weights can be explicitly described as $L_{\om_k}=\wedge^k V$ with a basis consisting of the vectors \[b_{i_1,\ldots,i_k}=b_{i_1}\wedge\ldots\wedge b_{i_k}.\] We may assume that $v_{\om_k}=b_{1,\ldots,k}$.

Consider the Pl\"ucker embedding \[F_d\subset\bP_d=\bP(L_{\om_{d_1}})\times\ldots\times\bP(L_{\om_{d_l}}).\] The product $\bP_d$ is equipped with the Pl\"ucker coordinates $X_{i_1,\ldots,i_{d_k}}$ with $1\le k\le l, 1\le i_1<\ldots<i_{d_k}\le n$, coordinate $X_{i_1,\ldots,i_{d_k}}$ corresponding to $b_{i_1,\ldots,i_{d_k}}\in L_{\om_{d_k}}$. The homogeneous coordinate ring of $\bP_d$ is $R_d=\bC[\{X_{i_1,\ldots,i_{d_k}}\}]$. The homogeneous coordinate ring of $F_d$ is then $\mathcal P_d=R_d/I_d$, where $I_d$ is the ideal of Pl\"ucker relations.

Now we introduce a grading on $R_d$ by setting 
\begin{equation}\label{grada}
\grad^a(X_{i_1,\ldots,i_{d_k}})=s_{i_1,\ldots,i_{d_k}}=|\{j|i_j>d_k\}|.
\end{equation}
Let us note that this grading has a straightforward interpretation: $s_{i_1,\ldots,i_{d_k}}$ is the least $m$ such that $b_{i_1,\ldots,i_{d_k}}\in (L_{\om_{d_k}})_{\le m}$. For instance, we have $s_{2,4,5}=2$ which is the PBW degree of the monomial $f_{1,5}f_{3,4}$ mapping $v_{\om_3}=b_{1,2,3}$ to $b_{2,4,5}$.

We proceed to consider the corresponding initial ideal $\initial_{\grad^a}I_d$, i.e.\ the ideal spanned by components of minimal grading of elements of $I_d$.
\begin{theorem}[{\cite[Theorem 3.13]{Fe}}]\label{degenplucker}
$F_d^a$ is isomorphic to the subvariety in $P_d$ cut out by the ideal $\initial_{\grad^a}I_d$.
\end{theorem}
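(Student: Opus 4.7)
The plan is to reduce the statement to two claims: $(a)$ $F_d^a$ embeds naturally as a subvariety of $\bP_d$, and $(b)$ the defining ideal of this embedded copy is exactly $\initial_{\grad^a}I_d$. For $(a)$ I would use the Cartan component property mentioned in the introduction: $L_\la^a$ arises as the cyclic $\mU^a$-submodule generated by the tensor product of highest weight vectors inside $(L_{\om_{d_1}}^a)^{\otimes a_{d_1}}\otimes\cdots\otimes(L_{\om_{d_l}}^a)^{\otimes a_{d_l}}$. Since as plain vector spaces $L_{\om_{d_k}}^a=L_{\om_{d_k}}=\wedge^{d_k}V$, this produces a Segre--Veronese-type embedding $\bP(L_\la^a)\hookrightarrow\bP_d$ which sends $\bm v_\la^a$ to the tuple $(\bm v_{\om_{d_1}},\ldots,\bm v_{\om_{d_l}})$ and intertwines the $N_-^a$-action on $L_\la^a$ with the factorwise $N_-^a$-action on $\bP_d$, so that $F_d^a$ lands in $\bP_d$ as a closed subvariety.

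For $(b)$, the natural tool is a flat one-parameter degeneration over $\bC$ whose generic fiber is $F_d$ and whose special fiber is $F_d^a$. Rescaling the root vectors $f_{i,j}\mapsto tf_{i,j}$ defines a family of Lie algebra actions interpolating between $\fn_-$ at $t\neq 0$ and $\fn_-^a$ at $t=0$, with a compatible family of modules having generic fiber $L_\la$ and special fiber $L_\la^a$. The Pl\"ucker coordinate $X_{i_1,\ldots,i_{d_k}}$ corresponds to the basis vector $b_{i_1,\ldots,i_{d_k}}\in(L_{\om_{d_k}})_{\le s_{i_1,\ldots,i_{d_k}}}$, so after rescaling it picks up a factor $t^{s_{i_1,\ldots,i_{d_k}}}=t^{\grad^a(X_{i_1,\ldots,i_{d_k}})}$. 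Consequently, the family of homogeneous coordinate rings of the orbit closures is precisely the Rees family associated with the $\grad^a$-grading on $R_d/I_d$, whose special fiber is $R_d/\initial_{\grad^a}I_d$.

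The main obstacle is to upgrade the set-theoretic inclusion $F_d^a\subseteq V(\initial_{\grad^a}I_d)$ coming from the above construction to an equality of projective schemes. This reduces to checking that the degenerating family is actually flat, i.e.\ that the $m$-th graded piece of $R_d/\initial_{\grad^a}I_d$ has dimension equal to that of $R_d/I_d$, which equals $\dim L_{m\la}$ by Borel--Weil. The identity $\dim L_{m\la}=\dim L_{m\la}^a$ is automatic from the PBW filtration, but matching $\dim(R_d/\initial_{\grad^a}I_d)_m$ with $\dim L_{m\la}^a$ requires an explicit monomial basis of $L_{m\la}^a$ that respects the Cartan component realization inside the symmetric power of $L_\la^a$. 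The FFLV basis from \cite{FFL1} supplies precisely this combinatorial input, and with it in hand the flatness of the family, and hence the theorem, follow.
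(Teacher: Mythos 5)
This statement is not proved in the paper at all---it is imported verbatim from \cite[Theorem 3.13]{Fe}, so there is no ``paper's own proof'' to compare against. Judged on its own, your sketch contains the right ingredients (the Cartan component realization, the Segre--Veronese embedding of $\bP_d$, the FFLV basis, and a one-parameter degeneration), and these are indeed the tools used in \cite{Fe}. Nevertheless the logical role you assign to flatness is off in a way that would matter if you tried to flesh this out.

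You frame the remaining work as ``checking that the degenerating family is actually flat.'' But for the Gr\"obner degeneration from $R_d/I_d$ to $R_d/\initial_{\grad^a}I_d$ flatness is automatic: the Rees-algebra construction over $\bC[t]$ is flat by general nonsense (this is exactly the statement the present paper quotes as \cite[Corollary 3.2.6]{HH} in the proof of Corollary~\ref{tfae}). What is \emph{not} automatic, and what actually needs proof, is that the special fiber of that flat family is $F_d^a$ rather than something strictly larger. That splits into two steps you have partially merged: first, that the defining ideal $J$ of $F_d^a\subset\bP_d$ contains $\initial_{\grad^a}I_d$ (this is the ``easy'' inclusion, coming from taking initial forms of relations along the orbit); second, that $\dim(R_d/J)_{m\la}=\dim(R_d/\initial_{\grad^a}I_d)_{m\la}$ for all $m$. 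For the second step, flatness already hands you $\dim(R_d/\initial_{\grad^a}I_d)_{m\la}=\dim L_{m\la}$, while $\dim(R_d/J)_{m\la}=\dim L_{m\la}^a=\dim L_{m\la}$ follows from the Cartan component property for $L_{m\la}^a$ (Theorem~\ref{cartancomp} applied to $m\la$), not from re-deriving a monomial basis of $L_{m\la}^a$ by hand. The FFLV basis enters exactly once, as the engine behind Theorem~\ref{cartancomp} in \cite{FFL1}, rather than as a separate ingredient needed to establish flatness.

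There is also a loose end in your identification of the Lie-algebra rescaling family $f_{i,j}\mapsto tf_{i,j}$ with the Gr\"obner/Rees family: you assert that the Pl\"ucker coordinate $X_{i_1,\ldots,i_{d_k}}$ ``picks up a factor $t^{s_{i_1,\ldots,i_{d_k}}}$'' so that the two families coincide. That is morally right, but as written it conflates a deformation of the module structure with a reweighting of coordinates on a fixed ambient space; one should either work entirely inside the Rees picture, or prove a comparison lemma identifying the two families of ideals. In the present paper the analogous comparison is packaged in Proposition~\ref{idealaskernel}, which realizes $\initial_{\grad^a}I_d$ directly as the kernel of a parametrization by $\exp(f)v^a_{\om_{d_k}}$; arguing via that characterization avoids having to match two a priori different families.
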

%Let us point out that the integers $s_{i_1,\ldots,i_{d_k}}$ have the following interpretation. The integer $s_{i_1,\ldots,i_{d_k}}$, as defined by~\eqref{grada}, is the least $m$ such that $b_{i_1,\ldots,i_{d_k}}\in (L_{\om_{d_k}})_m$. This is easily verified using the explicit description of the action of $\fn_-$ on $L_{\om_{d_k}}$. The root vector $f_{i,j}$ acts nontrivially on $b_{i_1,\ldots,i_{d_k}}$ if and only if $i\in\{i_1,\ldots,i_{d_k}\}$ and $j\notin\{i_1,\ldots,i_{d_k}\}$ with the result of the action being $\pm b_{j_1,\ldots,j_{d_k}}$ where \[\{j_1,\ldots,j_{d_k}\}=\{i_1,\ldots,i_{d_k}\}\backslash\{i\}\cup\{j\}.\] We deduce that every $L_{\om_{d_k}}^a$ is spanned by the vectors $b_{i_1,\ldots,i_{d_k}}^a$ which are the images of the $b_{i_1,\ldots,i_{d_k}}$ in the spaces $(L_{\om_k})_{s_{i_1,\ldots,i_{d_k}}}/(L_{\om_k})_{s_{i_1,\ldots,i_{d_k}}-1}$.

We will also require another (closely related) fact from this theory.
\begin{definition}
Given a finite set $\{M_i\}$ of cyclic modules over a Hopf algebra with respective generators $\{v_i\}$, the submodule in $\bigotimes M_i$ generated by $\bigotimes v_i$ is known as the \emph{Cartan component}.
\end{definition}
Let us consider two instances of this very general notion. For the chosen $\la$ consider the tensor product \[U_\la=L_{\omega_1}^{\otimes a_1}\otimes\ldots\otimes L_{\omega_{n-1}}^{\otimes a_{n-1}}.\] It is well known that the Cartan component in $U_\la$ generated by \[u_\la=v_{\omega_1}^{\otimes a_1}\otimes\ldots\otimes v_{\omega_{n-1}}^{\otimes a_{n-1}}\] is isomorphic to the irreducible representation $L_\la$ (simply because $u_\la$ has weight $\la$ and is the unique highest weight vector in $U_\la$). A similar fact holds for PBW degenerations. Denote \[U_\la^a=(L_{\omega_1}^a)^{\otimes a_1}\otimes\ldots\otimes (L_{\omega_{n-1}}^a)^{\otimes a_{n-1}}\] and \[u_\la^a=(v_{\omega_1}^a)^{\otimes a_1}\otimes\ldots\otimes (v_{\omega_{n-1}}^a)^{\otimes a_{n-1}}.\]
\begin{theorem}[essentially due to \cite{FFL1} but see also {\cite[Theorem 10.4]{favourable}}]\label{cartancomp}
The Cartan component $\mU(\fn_-^a)u_\la^a\subset U_\la^a$ is isomorphic to $L_\la^a$.
\end{theorem}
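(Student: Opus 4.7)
The plan is to construct a surjective $\mU(\fn_-^a)$-equivariant map $\phi\colon L_\la^a \twoheadrightarrow \mU(\fn_-^a) u_\la^a$ and then show it is an isomorphism via a dimension count based on the FFL monomial basis. The surjection will come directly from applying $\gr$ to the classical Cartan inclusion $\iota\colon L_\la \hookrightarrow U_\la$, $v_\la \mapsto u_\la$. This $\iota$ is $\mU(\fn_-)$-equivariant, and since the coproduct satisfies $\Delta(\mU(\fn_-)_{\le m}) \subseteq \sum_{i+j=m}\mU(\fn_-)_{\le i}\otimes\mU(\fn_-)_{\le j}$, one has $\iota(\mU(\fn_-)_{\le m}v_\la) = \mU(\fn_-)_{\le m} u_\la \subseteq (U_\la)_{\le m}$ with respect to the tensor-product PBW filtration on $U_\la$. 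The associated graded of this filtration is precisely $U_\la^a$, so passing to $\gr$ yields a $\mU(\fn_-^a)$-equivariant map $\phi\colon L_\la^a \to U_\la^a$ with $\phi(v_\la^a) = u_\la^a$; cyclicity of $L_\la^a$ over $\mU(\fn_-^a)$ forces the image of $\phi$ to be precisely $\mU(\fn_-^a) u_\la^a$.

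For injectivity of $\phi$ it will suffice to prove $\dim\mU(\fn_-^a)u_\la^a \ge \dim L_\la^a$. I would invoke the FFL monomial basis of $L_\la^a$ indexed by integer points of the FFLV polytope $P(\la)$, together with the Minkowski sum decomposition $P(\la) = \sum_k a_k P(\om_k)$. Endow $U_\la^a$ with the $\bZ_{\ge 0}^{\Phi^+}$-grading refining the obvious one, in which each $f^{\mathbf{t}}v_{\om_k}^a$ has multi-degree $\mathbf{t}$; the abelian coproduct makes $f_\beta^a$ shift multi-degree by $e_\beta$, so $f^{\mathbf{s}}u_\la^a$ is homogeneous of multi-degree $\mathbf{s}$, and vectors with distinct $\mathbf{s}$ are automatically linearly independent. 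It therefore reduces to checking $f^{\mathbf{s}} u_\la^a \neq 0$ for every $\mathbf{s} \in P(\la) \cap \bZ^{\Phi^+}$. Expanding via the coproduct writes $f^{\mathbf{s}}u_\la^a$ as a positive-integer linear combination of tensor monomials $\bigotimes_{(k,j)} f^{\mathbf{t}^{(k,j)}} v_{\om_k}^a$ over weak compositions $\mathbf{s} = \sum_{(k,j)} \mathbf{t}^{(k,j)}$; the nonzero summands correspond exactly to genuine Minkowski decompositions of $\mathbf{s}$, of which at least one exists by the Minkowski property, and positivity of the coefficients excludes cancellation. This gives $\dim\mU(\fn_-^a)u_\la^a \ge |P(\la)\cap\bZ^{\Phi^+}| = \dim L_\la^a$, and combined with the surjection $\phi$ we conclude that $\phi$ is an isomorphism.

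The main obstacle is this injectivity step: essentially all of the representation-theoretic depth is packaged into the FFL basis theorem for the fundamental representations plus the Minkowski-summability of the FFLV polytopes, which are taken as external inputs. The remainder of the argument is formal manipulation of filtrations, associated gradeds and the abelian coproduct, together with the classical (undegenerate) Cartan component fact.
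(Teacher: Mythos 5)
The paper does not prove Theorem~\ref{cartancomp}; it is cited from the literature (FFL1, and Theorem~10.4 of the ``favourable modules'' paper), so there is no in-paper argument to compare against. I therefore assess your proposal on its own terms.

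The construction of the surjection is sound: the classical Cartan inclusion $\iota\colon L_\la\hookrightarrow U_\la$ is filtered (PBW filtration on the source, tensor-convolution PBW filtration on the target, with $\iota(\mU(\fn_-)_{\le m}v_\la)\subset (U_\la)_{\le m}$ by the coproduct estimate), $\gr$ of the tensor filtration is $U_\la^a$, and cyclicity forces the image of $\gr\iota$ to be $\mU(\fn_-^a)u_\la^a$. This is the right setup; indeed the heart of the theorem is exactly that the pullback filtration $L_\la\cap (U_\la)_{\le m}$ coincides with the intrinsic PBW filtration $\mU(\fn_-)_{\le m}v_\la$, i.e.\ that $\gr\iota$ is injective.

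The injectivity argument, however, has a genuine gap. You assert that $U_\la^a$ carries a $\bZ_{\ge 0}^{\Phi^+}$-grading in which ``each $f^{\mathbf t}v_{\om_k}^a$ has multi-degree $\mathbf t$'' and in which $f^a_\beta$ shifts the degree by $e_\beta$. No such grading exists on $L_{\om_k}^a$ once $2\le k\le n-2$: the same basis vector of $L_{\om_k}^a$ is reached from $v_{\om_k}^a$ by several different monomials in the $f^a_{i,j}$. Concretely, for $n=4$, $k=2$ one has
\[
f^a_{1,3}f^a_{2,4}\,v_{\om_2}^a \;=\; \pm\, b_{3,4}^a \;=\; \pm\, f^a_{2,3}f^a_{1,4}\,v_{\om_2}^a,
\]
so the annihilator ideal of $v_{\om_2}^a$ in $\mU(\fn_-^a)$ contains a binomial that is not $\bZ^{\Phi^+}$-homogeneous; equivalently, applying $f^a_{1,3}$ to $b_{1,4}^a=f^a_{2,4}v_{\om_2}^a$ lands in $b_{3,4}^a$, whose unique FFLV-polytope representative is $e_{2,3}+e_{1,4}$, not $e_{1,3}+e_{2,4}$. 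Consequently $f^{\mathbf s}u_\la^a$ is \emph{not} multi-homogeneous of degree $\mathbf s$, and ``distinct $\mathbf s$ implies linearly independent'' is unjustified. (A smaller inaccuracy in the same step: $f^{\mathbf t}v_{\om_k}^a\ne 0$ is not equivalent to $\mathbf t\in P(\om_k)$; the exponent $e_{1,3}+e_{2,4}$ above acts nontrivially on $v_{\om_2}^a$ yet is not a lattice point of $P(\om_2)$. This does not break your ``at least one nonzero summand exists,'' which still follows from the lattice Minkowski decomposition, but it does undercut the framing.)

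What the argument actually needs is not a grading but a triangularity statement with respect to a total monomial order, exactly as in the paper's own Proposition~\ref{linind}. Fix a graded monomial (semigroup) order $\le$ on $\bZ_{\ge0}^{\Phi^+}$. For each basis vector $b^a_I$ of $L_{\om_k}^a$ let $\gamma(b^a_I)$ be the $\le$-minimal exponent $\mathbf t$ with $f^{\mathbf t}v_{\om_k}^a$ a nonzero multiple of $b^a_I$, and extend $\hat\gamma$ additively to the monomial basis of $U_\la^a$. Then in the coproduct expansion of $f^{\mathbf s}u_\la^a$ every summand $\bigotimes f^{\mathbf t^{(k,j)}}v_{\om_k}^a$ has $\hat\gamma$-degree $\sum\gamma(f^{\mathbf t^{(k,j)}}v_{\om_k}^a)\le\sum\mathbf t^{(k,j)}=\mathbf s$, and the top degree-$\mathbf s$ part is a positive combination of distinct basis tensors (the decompositions of $\mathbf s$ with each $\mathbf t^{(k,j)}\in\gamma(\text{basis of }L_{\om_k}^a)$), hence nonzero. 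Upper-triangularity with nonzero diagonal gives the linear independence of $\{f^{\mathbf s}u_\la^a : \mathbf s\in\Gamma_\la\}$, where $\Gamma_\la$ is the Minkowski sum of the $\gamma$-images. To conclude you must also know $|\Gamma_\la|\ge |P(\la)\cap\bZ^{\Phi^+}|$ and $\dim L_\la^a\le |P(\la)\cap\bZ^{\Phi^+}|$; both are supplied by the FFLV spanning result and the lattice-point Minkowski sum property, which you were already willing to invoke. As stated, though, your injectivity step rests on a grading that does not exist, and that is a real gap.
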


Next, let us explain the nature of the embedding $F_d^a\subset \bP_{d}$. On one hand, in view of Theorem~\ref{cartancomp}, $F_{d}^a$ is isomorphic to $\overline{N_-^a\bm u_\la^a}\subset\bP(U_\la^a)$ where $\bm u_\la^a$ corresponds to the line $\bC u_\la^a$. On the other, we have the embeddings \[\bP_{d}\subset\bP(L_{\om_{d_1}})^{a_{d_1}}\times\ldots\times\bP(L_{\om_{d_l}})^{a_{d_l}}\subset\bP(U_\la^a)\] where the first embedding is diagonal and the second is the Segre embedding. The image of $\bP_d$ under this embedding contains $\bm u_\la^a$ and this image is preserved by the $N_-^a$-action on $\bP(U_\la^a)$, therefore the orbit $N_-^a\bm u_\la^a$ and its closure are contained in $\bP_d\subset\bP(U_\la^a)$.

This lets us characterize the ideal $\initial_{\grad^a}I_{d}$ as the kernel of a certain map from $R_{d}$ to a polynomial ring. Choose $c_{i,j}\in\bC$ for $1\le i<j\le n$ and consider \[f=\sum c_{i,j}f_{i,j}^a\in\fn_-^a.\] 
\begin{definition}
For $d_k$ and a tuple $1\le i_1<\ldots<i_{d_k}\le n$ consider the vector $\exp(f)v_{\om_{d_k}}^a\in L_{\om_{d_k}}^a$ and decompose it into a linear combination of the $b_{i_1,\ldots,i_{d_k}}^a$. Denote $p_{i_1,\ldots,i_{d_k}}\in\mathbb Q[\{z_{i,j}, 1\le i<j\le n\}]$ the polynomial (homogeneous of degree $s_{i_1,\ldots,i_{d_k}}$) such that the coefficient of $b_{i_1,\ldots,i_{d_k}}^a$ is equal to $p_{i_1,\ldots,i_{d_k}}(\{c_{i,j}\})$. 
\end{definition}
One sees that such a polynomial exists by expanding $\exp(f)v_{\om_{d_k}}^a$ into a (terminating) series.

\begin{proposition}[see {\cite[Proof of Theorem 3.4]{fafefom}}]\label{idealaskernel}
The ideal $\initial_{\grad^a}I_{d}$ is the kernel of the homomorphism from $R_{d}$ to $\bC[\{z_{i,j},1\le i<j\le n\},\{z_k,1\le k\le n-1\}]$ taking $X_{i_1,\ldots,i_{d_k}}$ to $z_{d_k}p_{i_1,\ldots,i_{d_k}}$.
\end{proposition}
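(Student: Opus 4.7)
The plan is to view the homomorphism $\phi$ in the statement as the pullback of polynomial functions under an explicit parametrization of the multi-affine cone over $F_d^a$ sitting inside $L_{\om_{d_1}}^a\oplus\cdots\oplus L_{\om_{d_l}}^a$, and then to identify the defining ideal of this cone via Theorem~\ref{degenplucker}.

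First I would endow $R_d$ with its natural multi-grading placing $X_{i_1,\ldots,i_{d_k}}$ in multidegree $(0,\ldots,1,\ldots,0)$ with the $1$ in position $k$, and grade the target ring by giving $z_{d_k}$ the same multidegree while all other $z$-variables have multidegree $0$. With these choices $\phi$ is multi-graded since $\phi(X_{i_1,\ldots,i_{d_k}})=z_{d_k}p_{i_1,\ldots,i_{d_k}}$ has exactly the required multidegree, so $\ker\phi$ is multi-homogeneous; on the other side, $I_d$ is generated by multi-homogeneous Pl\"ucker relations and passing to $\grad^a$-initial parts preserves multi-homogeneity, so $\initial_{\grad^a}I_d$ is also multi-homogeneous. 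It therefore suffices to compare the two ideals on multi-homogeneous elements.

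Next, consider the morphism
\[
\Psi\colon\bC^{\binom{n}{2}}\times\bC^{n-1}\longrightarrow L_{\om_{d_1}}^a\oplus\cdots\oplus L_{\om_{d_l}}^a,\qquad(c,z)\longmapsto\bigl(z_{d_k}\exp(f)v_{\om_{d_k}}^a\bigr)_{k=1}^l,
\]
where $f=\sum c_{i,j}f_{i,j}^a$. Expanding each $\exp(f)v_{\om_{d_k}}^a$ in the basis $\{b_{i_1,\ldots,i_{d_k}}^a\}$ produces, by the very definition of $p_{i_1,\ldots,i_{d_k}}$, the coefficient $p_{i_1,\ldots,i_{d_k}}(c)$ in front of $b_{i_1,\ldots,i_{d_k}}^a$. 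Hence the pullback $\Psi^*$ on coordinate rings is exactly $\phi$ (the variables $z_k$ with $k\notin\{d_1,\ldots,d_l\}$ appear only as unused coordinates on the domain and lift to free variables in the target), and $\ker\phi$ is the ideal of the Zariski closure of $\Psi(\bC^{\binom{n}{2}}\times\bC^{n-1})$.

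Finally, for $z\in(\bC^*)^l$ the point $\Psi(c,z)$ is a lift to the multi-affine cone of the orbit representative $\exp(f)\bm v_\la^a\in\bP_d$, identified with its Segre--diagonal image as in the paragraph preceding the statement. As $(c,z)$ varies these lifts sweep out a dense subset of the multi-affine cone over $\overline{N_-^a\bm v_\la^a}=F_d^a$, so the closure of the image of $\Psi$ is this entire cone. Its multi-homogeneous vanishing ideal is the defining ideal of $F_d^a\subset\bP_d$, which by Theorem~\ref{degenplucker} equals $\initial_{\grad^a}I_d$. The one delicate point I anticipate is using Theorem~\ref{degenplucker} scheme-theoretically, i.e.\ ensuring that $R_d/\initial_{\grad^a}I_d$ is the honest homogeneous coordinate ring of $F_d^a$ rather than only cutting it out set-theoretically; this follows from the standard Hilbert-series argument for initial ideals with respect to a weight grading, implicit in the form of the theorem proved in \cite{Fe}.
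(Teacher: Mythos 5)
The paper gives no proof of this proposition, only a citation to \cite[Proof of Theorem 3.4]{fafefom}, so there is no in-paper argument to compare against; your approach via the parametrization $\Psi$ of the multi-affine cone and the identification $\phi=\Psi^*$ is the natural one and is in the spirit of the cited source. Your reductions are sound: the multi-grading argument, the identification of $\ker\phi$ with the vanishing ideal of $\overline{\Psi(\cdot)}$, and the observation that $\overline{\Psi(\cdot)}$ is the multi-affine cone over $F_d^a$ (note for this you implicitly use that the $\grad^a=0$ component $v^a_{\om_{d_k}}$ of $\exp(f)v^a_{\om_{d_k}}$ is nonzero, so $\Psi(c,z)$ lands in the locus where every coordinate factor is nonvanishing whenever $z_{d_k}\ne 0$).

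The one real gap is precisely where you flag uncertainty, but your proposed fix does not close it. The ``Hilbert-series argument'' establishes that $R_d/\initial_{\grad^a}I_d$ has the same multi-graded Hilbert function as $R_d/I_d$ (flatness of the Gr\"obner degeneration), but flatness alone does not make $\initial_{\grad^a}I_d$ radical, let alone prime; it could a priori be a non-reduced ideal whose vanishing set is $F_d^a$. To conclude $\initial_{\grad^a}I_d=I(F_d^a)=\ker\phi$ you need either (i) to read Theorem~\ref{degenplucker} as a scheme-theoretic isomorphism and invoke the primeness of $\initial_{\grad^a}I_d$, which in \cite{Fe} is a separate substantive result (proved via the FFLV basis and flatness, not via Hilbert functions alone), or (ii) to argue directly: your construction gives the containment $\initial_{\grad^a}I_d\subset\ker\phi$, the right side is prime by construction, and equality then follows by comparing multi-graded Hilbert functions, $\dim(R_d/\initial_{\grad^a}I_d)_\mu=\dim(R_d/I_d)_\mu=\dim L_\mu$ by flatness versus $\dim(R_d/\ker\phi)_\mu=\dim L_\mu^a=\dim L_\mu$, where the middle identification uses the Cartan component Theorem~\ref{cartancomp}. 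Either route works, but ``implicit in the Hilbert-series argument'' as stated conflates flatness with reducedness.
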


%To conclude this subsection let us introduce two different ``weight'' gradings on $R_{d(\la)}$ which we will be making use of. The first grading $\deg$ is by the semigroup $\Pi^+$ and is given by $\deg(X_{i_1,\ldots,i_{d_k}})=\om_{d_k}$. It is evident that $I_{d(\la)}$, $\initial_{\grad^a}I_{d(\la)}$ and, in fact, the defining ideal of any subvariety in $\bP_{d(\la)}$ is $\deg$-homogeneous. For $A$ a $\deg$-homogeneous subspace in $R_d$ we will denote $A_\mu$ its homogeneous component of grading $\la\in\Pi^+$ (note that $A_\mu=0$ unless $d(\mu)=d$). 

\subsection{PBW degenerate Demazure modules and Schubert varieties}

The Weyl group $W\cong S_n$ acts on $\fh^*$, the permutation $w\in W$ maps $\alpha_{i,j}$ to $\alpha_{w(i),w(j)}$ where we set $\alpha_{k,l}=-\alpha_{l,k}$ if $k>l$. For every $w\in W$ there is a unique (up to a scalar multiple) vector $v_{w\la}\in L_\la$ of weight $w\la$. 
\begin{definition}
The $\mU(\fn_+)$-module $D_{w\la}=\mU(\fn_+)v_{w_\la}$ is the corresponding Demazure module. 
\end{definition}
\begin{definition}
Let $\fn_{+w}\subset\fn_+$ be the subalgebra spanned by $e_{i,j}$ with $w^{-1}(\alpha_{i,j})\in\Phi^-$ (i.e.\ $w^{-1}(i)>w^{-1}(j)$).
\end{definition}
Note that for every positive root $\alpha_{i,j}$ such that $w^{-1}(\alpha_{i,j})\in\Phi^+$ we have $e_{i,j}v_{w\la}=0$. 
Consequently, $D_{w\la}=\mU(\fn_{+w})v_{w_\la}$. The dimension of $\fn_{+w}$ is the permutation length $l(w)$.

Let $\bm v_{w\la}\in\bP(L_\la)$ correspond to $\bC v_{w\la}$ and $N_+=\exp(\fn_+)\subset G$ while $N_{+w}=\exp(\fn_{+w})\subset N_+$. The Schubert variety $X_{w,d(\la)}$ is the orbit closure $\overline{N_+\bm v_{w\la}}=\overline{N_{+w}\bm v_{w\la}}$. This orbit closure indeed depends only on $w$ and $d(\la)$ and is naturally embedded into $F_{d(\la)}$ since $\bm v_{w\la}$ is contained in $F_{d(\la)}\subset\bP(L_\la)$.

Now, like above, we have the PBW filtration on $\mU(\fn_{+w})$ with component $\mU(\fn_{+w})_{\le m}$ spanned by PBW monomials of degree no greater than $m$. This induces the filtration $(D_{w\la})_{\le m}=\mU(\fn_{+w})_{\le m}v_{w\la}$. 
\begin{definition}
The associated graded algebra for the former filtration is $\mU(\fn_{+w}^a)$ for an abelian Lie algebra $\fn_{+w}^a$. The associated graded space for the latter filtration is $D_{w\la}^a$, the PBW degeneration of the Demazure module. 
\end{definition}
The algebra $\fn_{+w}^a$ is spanned by $e_{i,j}^a$ which are images of $e_{i,j}\in\fn_{+w}$. The space $D_{w\la}^a$ is a module over $\mU(\fn_{+w}^a)$. Furthermore, $D_{w\la}^a$ is acted upon by the corresponding Lie group $N_{+w}^a$ (i.e. $\bC^{l(w)}$ under addition) and so is $\bP(D_{w\la}^a)$. Let $D_{w\la}^a$ be generated by $v_{w\la}^a$, let the point $\bm v_{w\la}^a\in\bP(D_{w\la}^a)$ correspond to $\bC v_{w\la}^a$.
\begin{definition}
$X_{w\la}^a=\overline{N_{+w}^a\bm v_{w\la}^a}$ is the PBW degeneration of the Schubert variety $X_{w,d(\la)}$. 
\end{definition}
It should be noted that we could consider the PBW degeneration $\fn_+^a$ of the whole $\fn_+$, view $D_{w\la}^a$ as a $\fn_+^a$-module and define $X_{w\la}^a$ as a $N_+^a$-orbit closure for the larger group $N_+^a\cong\bC^{n\choose2}$. However, all $e_{i,j}^a\notin\fn_{+w}^a$ act trivially on $D_{w\la}^a$ as well as the corresponding subgroups $\exp(\bC e_{i,j}^a)\subset N_+^a$. This is why we lose nothing by limiting our attention to $\fn_{+w}$.

An observation due to~\cite[Subsection 1.2]{fourier} is that it is useful to consider a shift of this construction by $w^{-1}$. Recall that the group $W$ acts on $\fg$ by automorphisms that preserve $\fh$ while permuting the root subspaces. This induces a $W$-action on every $L_\la$. 
\begin{definition}
Consider the subalgebra $\fn_{-w}=w^{-1}(\fn_{+w})\subset\fg$. 
\end{definition}
By the definition of $\fn_{+w}$, the subalgebra $\fn_{-w}$ is contained in $\fn_-$ and is spanned by such $f_{i,j}$ that $w(i)>w(j)$. In fact, we have $\fn_{-w}=\fn_-\cap w^{-1}(\fn_+)$ while $\fn_{+w}=\fn_+\cap w(\fn_-)$. The space \[\widetilde D_{w\la}=w^{-1}(D_{w\la})=\mU(\fn_{-w})v_\la\] will also be a Demazure module but with respect to a different Borel subalgebra: not $\fb_+=\fn_+\oplus\fh$ but $w^{-1}(\fb_+)$. 

%Consider the Lie algebra $\fn_{-w}\subset\fn_-$ spanned by $f_{i,j}$ such that $w(-\alpha_{i,j})\in\Phi^+$. In other words, $f_{i,j}\in\fn_{-w}$ if and only if $w(i)>w(j)$ or, equivalently, $e_{w(j),w(i)}\in\fn_{+w}$. The space $\widetilde D_{w\la}=\mU(\fn_{-w})v_\la$ will also be a Demazure module but with respect to a different Borel subalgebra: not $\fb_+=\fn_+\oplus\fh$ but $w^{-1}(\fb_+)$. We may consider the isomorphism $\psi_w:\fn_{+w}\to\fn_{-w}$ mapping $e_{i,j}$ to $f_{w^{-1}(j),w^{-1}(i)}$ and define an action of $\fn_{+w}$ on $\widetilde D_{w\la}$ by letting $e_{i,j}\in\fn_{+w}$ act as $\psi_w(e_{i,j})$. Then $D_{w\la}$ and $\widetilde D_{w\la}$ are isomorphic as $\fn_{+w}$-modules.
%Note that we have $\fn_{-w}=\fn_-\cap w^{-1}(\fn_+)$ (while $\fn_{+w}=\fn_+\cap w(\fn_-)$). We also have $\widetilde D_{w\la}=w^{-1}(D_{w\la})$ with respect to the action of $W$ on $L_\la$. 

Once again, we have the PBW filtrations on $\mU(\fn_{-w})_{\le m}$ and $(\widetilde D_{w\la})_{\le m}=\mU(\fn_{-w})_{\le m}v_\la$. 
\begin{definition}
Let $\mU(\fn_{-w}^a)$ and $\widetilde D_{w\la}^a$ (PBW degeneration of $\widetilde D_{w\la}$) be the respective associated graded objects.
\end{definition}
We can naturally identify $\fn_{-w}^a$ with the subalgebra in $\fn_-^a$ spanned by $f_{i,j}^a$ with $w(i)>w(j)$. The space $\widetilde D_{w\la}^a$ is acted upon by $\mU(\fn_{-w}^a)$ and the corresponding Lie group $N_{-w}^a\cong \bC^{l(w)}$. Let $\widetilde{\bm v}_\la^a\in\bP(\widetilde D_{w\la}^a)$ correspond to the line $\bC v_\la^a$.
\begin{proposition}
The orbit closure $\overline{N_{-w}^a\widetilde{\bm v}_{\la}^a}$ is isomorphic to $X_{w\la}^a$.
\end{proposition}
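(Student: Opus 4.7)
The plan is to lift the obvious classical isomorphism $D_{w\la}\to\widetilde D_{w\la}$ given by the Weyl group action to the PBW-degenerate setting, and then argue that the orbit closures match.

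First, I would recall that $w^{-1}\in W$ acts on $\fg$ by a Lie algebra automorphism preserving $\fh$ and sending each root vector $e_{i,j}$ to a nonzero scalar multiple of the root vector of weight $w^{-1}(\alpha_{i,j})$. By the definitions $\fn_{+w}=\fn_+\cap w(\fn_-)$ and $\fn_{-w}=w^{-1}(\fn_{+w})$, this takes $\fn_{+w}$ isomorphically onto $\fn_{-w}$, sending each $e_{i,j}\in\fn_{+w}$ to a scalar multiple of $f_{w^{-1}(i),w^{-1}(j)}\in\fn_{-w}$. On the module side, $w^{-1}$ acts as a linear automorphism of $L_\la$ sending $v_{w\la}$ to a scalar multiple of $v_\la$, so it restricts to a vector space isomorphism $D_{w\la}\to\widetilde D_{w\la}$ that intertwines the two actions via the Lie algebra isomorphism above.

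The next step, and the only thing one really needs to check, is that this isomorphism respects the PBW filtration. Since the induced algebra automorphism $\mU(\fn_{+w})\to\mU(\fn_{-w})$ sends PBW generators to scalar multiples of PBW generators, it preserves the grading by monomial degree and hence $w^{-1}(\mU(\fn_{+w})_{\le m})=\mU(\fn_{-w})_{\le m}$. Applying this at the vectors $v_{w\la}$ and $v_\la$ gives $w^{-1}((D_{w\la})_{\le m})=(\widetilde D_{w\la})_{\le m}$. Passing to associated graded spaces therefore yields an induced linear isomorphism $\varphi:D_{w\la}^a\to\widetilde D_{w\la}^a$, which intertwines the action of the abelian Lie algebra $\fn_{+w}^a$ with that of $\fn_{-w}^a$ via the corresponding associated graded isomorphism $e_{i,j}^a\mapsto c_{i,j}f_{w^{-1}(i),w^{-1}(j)}^a$, and which sends $v_{w\la}^a$ to a nonzero scalar multiple of $v_\la^a$.

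Finally, exponentiating this Lie algebra isomorphism gives a group isomorphism $N_{+w}^a\to N_{-w}^a$ (both are simply $\bC^{l(w)}$), and $\varphi$ descends to an isomorphism $\bP(D_{w\la}^a)\to\bP(\widetilde D_{w\la}^a)$ equivariant with respect to this group isomorphism and sending $\bm v_{w\la}^a$ to $\widetilde{\bm v}_\la^a$. Consequently, $\varphi$ maps the orbit $N_{+w}^a\bm v_{w\la}^a$ bijectively onto $N_{-w}^a\widetilde{\bm v}_\la^a$ and restricts to an isomorphism of their closures, proving $X_{w\la}^a\cong\overline{N_{-w}^a\widetilde{\bm v}_\la^a}$. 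The main (and essentially only) obstacle is the compatibility of $w^{-1}$ with the PBW filtrations, which is really just the observation that a Weyl group automorphism permutes root vectors up to scalars and hence acts on $\mU(\fn_-)$ by an automorphism that is homogeneous for the PBW grading.
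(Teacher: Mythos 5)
Your proof is correct and takes essentially the same approach as the paper: both use the Weyl group automorphism $w^{-1}$ of $\fg$ and $L_\la$ to carry $\fn_{+w}$ onto $\fn_{-w}$ and $D_{w\la}$ onto $\widetilde D_{w\la}$, observe that this is filtered for the PBW filtration, and pass to associated graded objects and orbits. The paper compresses the compatibility with the filtration into an ``it is evident,'' which you usefully unpack by noting that the automorphism sends PBW generators to scalar multiples of PBW generators.
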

\begin{proof}
Consider the isomorphism $\psi:\fn_{+w}^a\to\fn_{-w}^a$ mapping $e_{i,j}^a$ to $f_{w^{-1}(j),w^{-1}(i)}^a$. We can now view $\widetilde D_{w\la}^a$ as a $\fn_{+w}^a$-module by letting $e_{i,j}^a$ act as $\psi(e_{i,j}^a)$. It is evident that $\widetilde D_{w\la}^a$ and $D_{w\la}^a$ are isomorphic $\fn_{+w}^a$-modules. Furthermore, we have the isomorphism $\exp(\psi):N_{+w}^a\to N_{-w}^a$ which defines an $N_{+w}^a$-action on $\widetilde D_{w\la}^a$. We see that $D_{w\la}^a$ and $\widetilde D_{w\la}^a$ are isomorphic $N_{+w}^a$-representations and the proposition follows.
\end{proof}
%We obtain an isomorphism $\psi:\fn_{+w}^a\to\fn_{-w}^a$. The space $\widetilde D_{w\la}^a$ is equipped with $\fn_{+w}^a$-action via $e_{i,j}^a\in\fn_{+w}^a$ acting as $\psi(e_{i,j}^a)\in\fn_{-w}^a$. Under this action $\widetilde D_{w\la}^a$ is isomorphic to $D_{w\la}^a$ as a $\fn_{+w}^a$-module. In particular, if we consider the Lie group $N_{-w}^a\cong \bC^{l(w)}$ with $\Lie(N_{-w}^a)=\fn_{-w}^a$, then the variety $\overline{N_{-w}^a\widetilde{\bm v}_{\la}^a}$ will be isomorphic to $X_{w\la}^a$. Here, in accordance with the above, $\widetilde v_\la^a$ generates the $\fn_{+w}^a$-module $\widetilde D_{w\la}^a$ and $\widetilde{\bm v}_{\la}^a$ is the corresponding point in $\bP(\widetilde D_{w\la}^a)$. %(From now on $X_{w\la}^a$ will always be viewed as a subvariety in $\bP(\widetilde D_{w\la}^a)$ and we will not return to the untwisted notations.)

To summarize the Proposition and its proof, the objects $\fn_{-w}^a$, $\widetilde D_{w\la}^a$ and $\overline{N_{-w}^a\widetilde{\bm v}_{\la}^a}$ are isomorphic to the objects $\fn_{+w}^a$, $D_{w\la}^a$ and $X_{w\la}^a$ and can be studied instead of these latter objects. The usefulness comes from the fact that $\fn_{-w}^a$ (resp. $N_{-w}^a$) is naturally embedded into $\fn_-^a$ (resp. $N_-^a$) which lets us exploit our understanding of PBW degenerations of irreducible representations and flag varieties. This idea is applied in~\cite[Section 5]{fourier} to show, among other things, that if $w$ is sufficiently nice (``triangular''), then $X_{w\la}^a$ depends only on $d(\la)$ and embeds naturally into $F_{d(\la)}^a$. This approach is then furthered in~\cite[Theorem 2.4]{CFF} to show that for a certain narrower class of permutations $w$ (termed ``rectangular'') the variety $X_{w\la}^a$ is actually a Schubert variety for a certain $SL_m$. %Subsequently, from now on, when referring to $X_{w\la}^a$, we will view it as an orbit closure inside of $\bP(\widetilde D_{w\la}^a)$ rather than $\bP(D_{w\la}^a)$.

\section{General observations}\label{general}

In this section we will derive some general facts about the PBW degenerations $\widetilde D_{w\la}^a$ and $X_{w\la}^a$ via the study of the corresponding Cartan components. These facts will be useful to us in the construction of our counterexamples.

%Note that in addition to the PBW filtration on $\widetilde D_{w\la}$ induced from $\fn_{-w}$ there another filtration on this space. Namely, since $\widetilde D_{w\la}$ is embedded into $L_\la$, the PBW filtration on $L_\la$ (induced from $\fn_-$) restricts to a filtration on $\widetilde D_{w\la}$. The associated graded space, which we denote $C_{w\la}^a$, is embedded into $L_\la^a$ and is invariant under the action of $\fn_{-w}^a\subset\fn_-^a$ (since $\widetilde D_{w\la}$ is invariant under the action of $\fn_{-w}$).
Within this section fix $w\in W$ and $\la=(a_1,\ldots,a_{n-1})\in\Pi^+$ with $d(\la)=d=(d_1,\ldots,d_l)$. Since $\widetilde D_{w\la}^a$ and $L_{\la}^a$ are defined as associated graded spaces, they are equipped with a $\bZ_{\ge 0}$-grading that we also denote $\grad^a$. For a $\grad^a$-graded space $A$ we will simply write $A_m$ to denote its $\grad^a$-homogeneous component of grading $m$. Also observe that, since $\fn_{-w}^a$ is embedded into $\fn_-^a$, we have a $\fn_{-w}^a$-module structure on $L_\la^a$.
\begin{proposition}
There exists a homomorphism of $\fn_{-w}^a$-modules \[\varphi_{w\la}:\widetilde D_{w\la}^a\to L_{\la}^a\] that respects $\grad^a$ and takes $\widetilde v_{w\la}^a$ to $v_\la^a$.
\end{proposition}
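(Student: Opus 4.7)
The plan is to construct $\varphi_{w\la}$ as the map of associated graded spaces induced by the tautological inclusion $\widetilde D_{w\la}\hookrightarrow L_\la$, after first checking that this inclusion respects the two PBW filtrations.

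First I would observe that $\fn_{-w}\subset\fn_-$ implies $\mU(\fn_{-w})_{\le m}\subset\mU(\fn_-)_{\le m}$ for every $m$, simply because the PBW filtration in both cases counts the total number of factors in a PBW monomial and every PBW monomial in the $f_{i,j}$ with $w(i)>w(j)$ is also a PBW monomial in $\fn_-$. Applying both sides to $v_\la$ yields the containment
\[
(\widetilde D_{w\la})_{\le m}=\mU(\fn_{-w})_{\le m}v_\la\subset\mU(\fn_-)_{\le m}v_\la=(L_\la)_{\le m},
\]
so the inclusion $\widetilde D_{w\la}\hookrightarrow L_\la$ is a morphism of $\bZ_{\ge 0}$-filtered vector spaces.

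Next I would pass to associated graded spaces to obtain a linear map $\varphi_{w\la}:\widetilde D_{w\la}^a\to L_\la^a$ which automatically respects $\grad^a$. On the grade-$0$ component, $\widetilde v_{w\la}^a$ and $v_\la^a$ are both the class of $v_\la$, so $\varphi_{w\la}(\widetilde v_{w\la}^a)=v_\la^a$ by construction.

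Finally I would verify $\fn_{-w}^a$-equivariance. For $f\in\fn_{-w}$ and $\bar x\in(\widetilde D_{w\la}^a)_m$ represented by some $x\in(\widetilde D_{w\la})_{\le m}$, the class $f^a\bar x\in(\widetilde D_{w\la}^a)_{m+1}$ is by definition the class of $fx\in(\widetilde D_{w\la})_{\le m+1}$, and likewise $f^a\varphi_{w\la}(\bar x)\in(L_\la^a)_{m+1}$ is the class of $fx\in(L_\la)_{\le m+1}$. Since $\varphi_{w\la}$ is induced by the inclusion, both sides agree, so the identity $\varphi_{w\la}(f^a\bar x)=f^a\varphi_{w\la}(\bar x)$ is tautological.

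There is no real obstacle in this proof: the PBW filtration is defined uniformly on $\mU(\fg)$ and its subalgebras in terms of the number of generator factors, which makes the compatibility check automatic. The only subtle point worth flagging — and what I would expect the paper to exploit in the counterexamples — is that $\varphi_{w\la}$ need not be injective, since an element of $\widetilde D_{w\la}$ realized by a degree-$m$ expression in $\mU(\fn_{-w})$ applied to $v_\la$ may, once viewed inside $L_\la$, coincide with the result of a lower-degree expression in $\mU(\fn_-)$ applied to $v_\la$; this is precisely the defect whose study will distinguish the Cartan component from $\widetilde D_{w\la}^a$ in the sequel.
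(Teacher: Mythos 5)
Your proof is correct and takes essentially the same route as the paper: observe that $(\widetilde D_{w\la})_{\le m}=\mU(\fn_{-w})_{\le m}v_\la\subset\mU(\fn_-)_{\le m}v_\la=(L_\la)_{\le m}$, then pass to associated graded spaces. Your write-up is just a bit more explicit about the equivariance check, which the paper leaves implicit.
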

\begin{proof}
The Demazure module $\widetilde D_{w\la}$ is defined as a subspace of $L_{\la}$ which provides an embedding $(\widetilde D_{w\la})_{\le m}\subset(L_\la)_{\le m}$ (PBW filtrations with respect to $\fn_{-w}$ and $\fn_-$ respectively) for all $m$. These embeddings induce maps $(\widetilde D_{w\la}^a)_m\to(L_\la^a)_m$ which sum up to give $\varphi_{w\la}$.
\end{proof}

\begin{definition} 
Denote $E_{w\la}\subset L_\la^a$ the image of $\varphi_{w\la}$. 
\end{definition}
In other words, $E_{w\la}=\mU(\fn_{-w}^a)v_\la^a$. 
\begin{definition}
Denote $Y_{w,d}$ the variety $\overline{N_{-w}^a\bm v_\la^a}\subset \bP(E_{w\la})\subset\bP(L_\la^a)$. 
\end{definition}
Since $N_{-w}^a$ is embedded in to $N_-^a$, the variety $Y_{w,d}$ is embedded into $F_d^a$ and depends only on $d$, hence the notation. The modules $\widetilde D_{w\om_{k}}^a$ and $E_{w\om_k}$ are easily described.
\begin{proposition}\label{fundamental}
The maps $\varphi_{w\om_k}$ are injective for all $1\le k\le n-1$ and $w\in W$. The subspace $E_{w\om_k}\subset L_{\om_k}^a$ is spanned by the vectors $b_{i_1,\ldots,i_k}^a$ for which the following holds. For any $1\le l\le k$ the $l$th smallest number among $w(i_1),\ldots,w(i_k)$ is no greater than the $l$th smallest number among $w(1),\ldots,w(k)$.
\end{proposition}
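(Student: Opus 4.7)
The plan is to identify the image of $\varphi_{w\om_k}$ explicitly inside $L_{\om_k}^a = \wedge^k V$, and then compare its dimension with that of $\widetilde D_{w\om_k}$ in order to deduce injectivity. The starting point is the following description of the degenerate action on $L_{\om_k}^a$: using $\grad^a(b_S) = |S \cap \{k+1,\dots,n\}|$ one computes that, for $i<j$, the vector $f_{i,j}^a b_S^a$ is nonzero only when $i \in S$, $j \notin S$, $i \le k < j$, in which case it equals $\pm b_{(S \setminus \{i\}) \cup \{j\}}^a$ (any other configuration leaves $\grad^a$ unchanged, so its associated graded contribution vanishes). Consequently $E_{w\om_k}$ is the linear span of those $b_S^a$ for which $S$ can be obtained from $S_0 = \{1,\dots,k\}$ by a sequence of swaps $T \mapsto (T \setminus \{i\}) \cup \{j\}$ satisfying $i \in T$, $j \notin T$, $i \le k < j$, and $w(i) > w(j)$.

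The combinatorial core is to show that the set of reachable $S$ coincides with the set $\mathcal S_w$ of $k$-subsets satisfying the majorization condition of the statement. One inclusion is easy: an allowed swap replaces $w(i) \in w(T)$ by the smaller element $w(j)$, and substituting a smaller entry into a sorted sequence only decreases it coordinatewise, so the inequality against $w(S_0)$ (trivially satisfied at $T = S_0$) is preserved throughout. The converse is the crux. Given $S \in \mathcal S_w$, observe that the majorization condition is equivalent to $w(S) \le w(S_0)$ in the Bruhat order on $k$-subsets. A standard Hall-type pairing argument, combined with the elementary fact that this Bruhat order is preserved when a common element is removed from both subsets, yields a bijection $\tau : S_0 \setminus S \to S \setminus S_0$ satisfying $w(\tau(s)) < w(s)$ for every $s$. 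The $|S_0 \setminus S|$ individual swaps $s \mapsto \tau(s)$ involve pairwise disjoint indices, so they can be executed in any order, each one remaining valid throughout, and together they take $S_0$ to $S$.

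For injectivity, I would invoke the standard description of the minuscule Demazure module: $D_{w\om_k} = \mU(\fn_+) v_{w\om_k}$ has basis $\{b_{S'} : S' \le \{w(1),\dots,w(k)\}\}$ in the Bruhat order on $k$-subsets, a fact proved by the minuscule analogue of the swap analysis above (applied now to $\fn_+$ acting on the highest-weight-shifted vector $b_{\{w(1),\dots,w(k)\}}$). Transporting by the Weyl-group action $w^{-1}$, which permutes the weight basis up to signs, we conclude that $\widetilde D_{w\om_k}$ has basis $\{b_S : w(S) \le w(S_0)\} = \{b_S : S \in \mathcal S_w\}$. Hence $\dim \widetilde D_{w\om_k}^a = \dim \widetilde D_{w\om_k} = |\mathcal S_w| = \dim E_{w\om_k}$, and since $\varphi_{w\om_k}$ is by construction a graded surjection onto $E_{w\om_k}$, this forces it to be injective. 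The main obstacle is the reverse inclusion in the second paragraph: arranging the Hall-type pairing so that the individual swaps each lie in $\fn_{-w}^a$ and can be executed independently without interfering with one another.
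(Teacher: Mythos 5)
Your proof is correct and keeps the paper's overall skeleton (reduce injectivity to a dimension count against $\widetilde D_{w\om_k}$, and reduce that to showing $E_{w\om_k}$ contains the span of the indicated $b_S^a$), but the key combinatorial step is done by a genuinely different route. The paper writes down one explicit monomial by pairing $S_0=\{1,\ldots,k\}$ with $S=\{i_1,\ldots,i_k\}$ position-by-position in the $w$-sorted orders and keeping the factors $f_{i,j}^a$ with $i<j$; you instead extract a matching $\tau:S_0\setminus S\to S\setminus S_0$ with $w(\tau(s))<w(s)$ by a Hall-type argument. Your route is more robust: pairing the \emph{full} sets can align an element of $S_0\cap S$ against two different partners, producing spurious factors $f_{i,j}^a$ with both $i,j\le k$ that annihilate the generator (e.g.\ $n=4$, $k=2$, $w=[4,3,2,1]$, $S=\{2,3\}$ yields the pair $(1,2)$ and hence the factor $f_{1,2}^a$, which kills $b_{1,2}^a$ in $\wedge^2V$). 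The ``main obstacle'' you flag at the end in fact dissolves: since $S_0=\{1,\ldots,k\}$, automatically $S_0\setminus S\subset\{1,\ldots,k\}$ and $S\setminus S_0\subset\{k+1,\ldots,n\}$, so every pair has $s\le k<\tau(s)$ and each $f_{s,\tau(s)}^a$ is a nontrivial raising step in the degenerate $\wedge^kV$; the Hall condition (equivalent to the counting form $|\{s\in S:w(s)\le t\}|\ge|\{s\in S_0:w(s)\le t\}|$ of the majorization hypothesis) guarantees $w(\tau(s))<w(s)$, placing each factor in $\fn_{-w}^a$; and the pairs $(s,\tau(s))$ are disjoint index pairs, so the factors commute in $\mU(\fn_{-w}^a)$ and their product carries $b_{S_0}^a$ to $\pm b_S^a$. (You can run Hall directly on $S_0$ versus $S$ without the preliminary removal of common elements, if you prefer.) The injectivity conclusion is then exactly the paper's dimension argument.
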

\begin{proof}
The vectors $b_{i_1,\ldots,i_k}$ with the property from the last sentence of the proposition are known to span $\widetilde D_{w\om_k}$. Therefore, for dimensional reasons, one is only to verify that each such $b_{i_1,\ldots,i_k}^a$ is indeed contained in $E_{w\om_k}$. Such a $b_{i_1,\ldots,i_k}^a$ is obtained from $v_{\om_k}^a$ by the action of the product of all $f_{i,j}^a$ such that $w(i)$ is the $l$th smallest number among $w(1),\ldots,w(k)$ and $w(j)$ is the $l$th smallest number among $w(i_1),\ldots,w(i_k)$ for some $1\le l\le k$ and, moreover, $i<j$.
\end{proof}

\begin{remark}\label{remcartcomp}
The module $\widetilde D_{w\om_k}^a=E_{w\om_k}$ is embedded into $L_{\om_k}^a$. Therefore $E_{w\la}$ is the $\mU(\fn_{-w}^a)$-submodule in the tensor product \[(\widetilde D_{w\omega_1}^a)^{\otimes a_1}\otimes\ldots\otimes (\widetilde D_{\omega_{n-1}}^a)^{\otimes a_{n-1}}\] generated by the highest weight vector. This lets one view $E_{w\la}$ as the Cartan component for PBW degenerations of Demazure modules.
\end{remark}

\begin{definition}
Let $\bm\varphi_{w\la}:\bP(\widetilde D_{w\la}^a)\to \bP(L_\la^a)$ be the rational map induced by $\varphi_{w\la}$. 
\end{definition}

\begin{proposition}\label{biratprop}
The map $\bm\varphi_{w\la}$ restricts to a birational equivalence between $X_{w\la}^a$ and $Y_{w,d}$. 
\end{proposition}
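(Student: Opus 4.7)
The plan is to show that $\bm\varphi_{w\la}$ restricts to an isomorphism between the two Zariski-dense open orbits $N_{-w}^a\widetilde{\bm v}_\la^a\subset X_{w\la}^a$ and $N_{-w}^a\bm v_\la^a\subset Y_{w,d}$; this will immediately yield the birational equivalence on the closures. First I would verify that $\bm\varphi_{w\la}$ is everywhere defined on the first orbit: for any $n=\exp(X)$ with $X\in\fn_{-w}^a$, the vector $nv_\la^a\in L_\la^a$ has weight-$\la$ component equal to $v_\la^a\neq 0$ (since $X$ strictly decreases weight), so it represents a well-defined point $n\bm v_\la^a$. By the $\fn_{-w}^a$-equivariance of $\varphi_{w\la}$, the map then sends $n\widetilde{\bm v}_\la^a$ to $n\bm v_\la^a$ and gives a surjection between the two open orbits.

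The core step will be comparing the stabilizers $S_1=\mathrm{Stab}_{N_{-w}^a}(\widetilde{\bm v}_\la^a)$ and $S_2=\mathrm{Stab}_{N_{-w}^a}(\bm v_\la^a)$. Since $N_{-w}^a\cong\bC^{l(w)}$ is abelian unipotent, every closed subgroup is connected and determined by its Lie algebra, so it suffices to show $\mathrm{Lie}\,S_1=\mathrm{Lie}\,S_2$. A brief weight argument reduces each Lie algebra to a plain annihilator: if $X\in\fn_{-w}^a$ and $Xv_\la^a\in\bC v_\la^a$, then since $Xv_\la^a$ is supported on weights strictly below $\la$ it must vanish, so $\mathrm{Lie}\,S_1=\{X\in\fn_{-w}^a:Xv_\la^a=0\text{ in }\widetilde D_{w\la}^a\}$ and analogously for $S_2$.

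To see that these annihilators actually coincide, I would inspect the degree-$1$ components under $\grad^a$. The space $(\widetilde D_{w\la}^a)_1=(\widetilde D_{w\la})_{\le 1}/(\widetilde D_{w\la})_{\le 0}$ is canonically identified with the subspace $\fn_{-w}v_\la\subset L_\la$, because $(\widetilde D_{w\la})_{\le 0}=\bC v_\la$ has no overlap in weight with $\fn_{-w}v_\la$; the same reasoning identifies $(L_\la^a)_1$ with $\fn_- v_\la$, and $\varphi_{w\la}$ restricted to degree $1$ is simply the inclusion $\fn_{-w}v_\la\hookrightarrow\fn_- v_\la$. Thus both annihilators coincide with $\{X\in\fn_{-w}:Xv_\la=0\text{ in }L_\la\}$, and $S_1=S_2=:S$.

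Once equality of stabilizers is established, both orbits become principal $N_{-w}^a/S$-torsors and $\bm\varphi_{w\la}$ is an $N_{-w}^a$-equivariant map sending basepoint to basepoint; it is therefore the canonical isomorphism, and the birational equivalence follows by passing to closures. The only slightly delicate point is the degree-$1$ comparison, but it rests solely on the elementary observation that $\bC v_\la$ and $\fn_{-w}v_\la$ share no weights; everything else is formal from equivariance and the structure of abelian unipotent group actions.
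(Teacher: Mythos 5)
Your proof is correct and follows essentially the same route as the paper's: both arguments reduce the claim to the observation that the two points have the same stabilizer in $N_{-w}^a$, by comparing the annihilator of $v_\la^a$ inside $\widetilde D_{w\la}^a$ with the one inside $L_\la^a$ via the degree-$1$ graded piece, where $\varphi_{w\la}$ is injective (both being identified with the annihilator of $v_\la$ in $\fn_{-w}$). Your writeup spells out a couple of steps the paper leaves implicit — that $\bm\varphi_{w\la}$ is everywhere defined on the open orbit, and the reduction from group stabilizers to Lie algebra annihilators for the abelian unipotent group — but the key lemma and overall strategy are the same.
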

\begin{proof} 
For $f_{i,j}^a\in\fn_{-w}^a$ the vector $f_{i,j}^a v_\la^a$ is the image of $f_{i,j}v_\la$ in $(L_\la)_{\le 1}/(L_\la)_{\le 0}\subset L_\la^a$, hence $f_{i,j}^a v_\la^a=0$ if and only if $f_{i,j} v_\la=0$. Similarly, $f_{i,j}^a \widetilde v_\la^a=0$ if and only if $f_{i,j} v_\la=0$, therefore $\widetilde{\bm v}_{\la}^a$ and ${\bm v}_{\la}^a$ have the same stabilizer in $N_{-w}^a$. Since $\varphi_{w\la}$ is a homomorphism of $N_{-w}^a$-modules, the map $\bm\varphi_{w\la}$ identifies the orbits $N_{-w}^a\widetilde{\bm v}_{\la}^a$ and $N_{-w}^a\bm v_\la^a$ and thus provides a birational equivalence between their closures.
\end{proof}

\begin{remark}\label{birational}
It is shown in in~\cite[Subsection 3.4]{fourier} that if $w$ is triangular, then the map $\varphi_{w\la}$ is injective. The map $\bm\varphi_{w\la}$, therefore, is an injective morphism and provides an isomorphism between $X_{w\la}^a$ and $Y_{w,d}$. Part of our goal in this paper is showing that this does not necessarily happen for general $w$ and we may, moreover, have $X_{w\la}^a\not\cong Y_{w,d}$.
\end{remark}

The defining ideal of $Y_{w,d}$ can be characterized algebraically.
\begin{definition}
Let $J_{w,d}\subset R_d$ be the ideal cutting out $Y_{w,d}\subset P_d$. 
\end{definition}
\begin{definition}
Denote $p^w_{i_1,\ldots,i_{d_k}}$ the polynomial obtained from the polynomial $p_{i_1,\ldots,i_{d_k}}$ defined in Subsection~\ref{abelianPBW} by setting $z_{i,j}=0$ when $w(i)<w(j)$.
\end{definition}
\begin{proposition}\label{idealaskernel1}
The ideal $J_{w,d}$ is the kernel of the homomorphism from $R_d$ to $\bC[\{z_{i,j}\},\{z_k,1\le k\le n-1\}]$ taking $X_{i_1,\ldots,i_{d_k}}$ to $z_{d_k}p^w_{i_1,\ldots,i_{d_k}}$.
\end{proposition}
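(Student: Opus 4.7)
The plan is to adapt the parametrization argument behind Proposition~\ref{idealaskernel} directly to the subgroup $N_{-w}^a\subset N_-^a$. The crucial observation is that restricting the parameters $c_{i,j}$ to the subset $w(i)>w(j)$ (i.e.\ to elements of $\fn_{-w}^a$) is precisely the specialization that by definition turns the polynomials $p_{i_1,\ldots,i_{d_k}}$ into $p^w_{i_1,\ldots,i_{d_k}}$.

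First I would record that $Y_{w,d}\subset F_d^a\subset\bP_d$, so that the Pl\"ucker coordinates on $\bP_d$ genuinely cut out $Y_{w,d}$. Under the diagonal-Segre chain $\bP_d\subset\bP(L_{\om_{d_1}}^a)^{a_{d_1}}\times\cdots\times\bP(L_{\om_{d_l}}^a)^{a_{d_l}}\subset\bP(U_\la^a)$ employed after Theorem~\ref{cartancomp}, the line $\bC v_\la^a\in\bP(L_\la^a)$ is identified with $\bC u_\la^a\in\bP(U_\la^a)$, which lies in the image of $\bP_d$. Since the relevant embeddings are $N_{-w}^a$-equivariant, the orbit $N_{-w}^a\bm v_\la^a$ is carried onto $N_{-w}^a\bm u_\la^a\subset\bP_d$, and $Y_{w,d}$ is identified with the closure of this orbit inside the multiprojective space $\bP_d$.

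Next, given $f=\sum_{w(i)>w(j)}c_{i,j}f_{i,j}^a\in\fn_{-w}^a$, the multi-Pl\"ucker coordinates of $\exp(f)\bm v_\la^a\in\bP_d$ are, factor by factor, the coefficients appearing in the expansion of $\exp(f)v_{\om_{d_k}}^a$ in the basis $\{b_{i_1,\ldots,i_{d_k}}^a\}$ of $L_{\om_{d_k}}^a$. By the defining property of $p_{i_1,\ldots,i_{d_k}}$ and the vanishing of all parameters $c_{i,j}$ with $w(i)<w(j)$, these coefficients are exactly the values $p^w_{i_1,\ldots,i_{d_k}}(\{c_{i,j}\})$. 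Exactly as in Proposition~\ref{idealaskernel}, the auxiliary variables $z_1,\ldots,z_{n-1}$ encode the independent projective rescalings of the $l$ factors of $\bP_d$, and the ideal cutting out the closure of the parametrized subset is the kernel of the map $X_{i_1,\ldots,i_{d_k}}\mapsto z_{d_k}p^w_{i_1,\ldots,i_{d_k}}$, which by definition is $J_{w,d}$.

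There is no genuine obstacle here: the statement is a direct restriction of Proposition~\ref{idealaskernel}, and the only care required is in keeping track of the identifications $\bP(L_\la^a)\supset Y_{w,d}\leftrightarrow \bP_d\leftrightarrow \bP(U_\la^a)$ so that the parametrization truly lands in the multi-projective ambient space and the kernel description transfers verbatim.
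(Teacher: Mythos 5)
Your argument is essentially the same as the paper's: restrict the exponential parametrization of the $N_-^a$-orbit from Proposition~\ref{idealaskernel} to $N_{-w}^a$ by setting $c_{i,j}=0$ when $w(i)<w(j)$, observe that this is exactly the specialization producing $p^w_{i_1,\ldots,i_{d_k}}$ from $p_{i_1,\ldots,i_{d_k}}$, and conclude via the kernel characterization of the defining ideal of an orbit closure. The paper's proof is terser but makes the same moves; your added detail about the diagonal-Segre embeddings and the role of the $z_k$ variables is just a more explicit unwinding of the reference to Proposition~\ref{idealaskernel}.
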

\begin{proof}
Choose numbers $c_{i,j}\in\bC$ for all pairs $1\le i<j\le n$ with $w(i)>w(j)$ (i.e.\ $f_{i,j}^a\in\fn_{-w}^a$) and consider $f=\sum c_{i,j}f_{i,j}^a\in\fn_{-w}^a$. Choose some $d_k$ and a tuple $1\le i_1<\ldots<i_{d_k}\le n$, consider the vector $\exp(f)v_{\om_{d_k}}^a\in L_{\om_{d_k}}^a$ and decompose it into a linear combination of the $b_{i_1,\ldots,i_{d_k}}^a$. The coefficient of $b_{i_1,\ldots,i_{d_k}}^a$ will equal $p^w_{i_1,\ldots,i_{d_k}}(\{c_{i,j}\})$. This (together with the bijectivity of the exponential map for $\fn_{-w}^a$) implies the proposition.
\end{proof}

Before proceeding we observe that the ring $R_d$ is graded by $\Pi^+$ by setting $\deg(X_{i_1,\ldots,i_{d_k}})=\om_{d_k}$. The defining ideals of subvarieties in $P_d$ are $\deg$-homogeneous and so are their initial ideals. For a $\deg$-graded space $A$ we will write $A_\mu$ to denote its $\deg$-homogeneous component of grading $\mu$. 

Now let us give a slight modification of Theorem~\ref{cartancomp}. Denote \[T_\la^a=\Sym^{a_1}(L_{\omega_1}^a)\otimes\ldots\otimes \Sym^{a_{n-1}}(L_{\omega_{n-1}}^a)\] and \[t_\la^a=(v_{\omega_1}^a)^{a_1}\otimes\ldots\otimes (v_{\omega_{n-1}}^a)^{a_{n-1}}\in T_\la^a.\] Since $T_\la^a$ is embedded into $U_\la$ as the space of tensors that are symmetric in the according sense, we immediately see that the $\fn_-^a$-submodule $\mU(\fn_-^a)t_\la^a\subset T_\la^a$ is isomorphic to $L_\la^a$.

The space $T_\la^a$ has a basis comprised of vectors obtained by taking for each $k$ a (symmetric) product of $a_k$ vectors of the form $e_{i_1,\ldots,i_k}^a$ and then taking the tensor product of these $n-1$ symmetric products. The component $(R_d)_\la$ has a basis obtained from this basis in $T_\la^a$ by replacing each $e_{i_1,\ldots,i_k}^a$ with $X_{i_1,\ldots,i_k}$ and symmetric and tensor products by products in the ring $R_d$. The bijection between these two bases establishes a duality between $T_\la^a$ and $(R_d)_\la$.
\begin{proposition}\label{edual}
The subspace $(J_{w,d})_\la\subset (R_d)_\la$ is the orthogonal of the subspace $E_{w\la}\subset T_\la^a$ with respect to the above duality.
\end{proposition}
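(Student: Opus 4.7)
I would show that both $(J_{w,d})_\la$ and $E_{w\la}^\perp$ equal the space of multi-homogeneous polynomials of multi-degree $(a_1,\ldots,a_{n-1})$ on $L_{\om_1}^a\times\cdots\times L_{\om_{n-1}}^a$ that vanish at every point of the form $(\exp(f)v_{\om_1}^a,\ldots,\exp(f)v_{\om_{n-1}}^a)$, $f\in\fn_{-w}^a$. This reduces the proposition to the observation that these two orthogonality conditions are the same.

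For the $(J_{w,d})_\la$-side, this is immediate from the definition: $J_{w,d}$ is the ideal of $Y_{w,d}$, a polynomial vanishes on $Y_{w,d}$ iff it vanishes on the (dense) orbit, and via the Segre-type embedding $\bP_d\hookrightarrow\bP(U_\la^a)$ recalled earlier, evaluation of a multi-homogeneous polynomial on the orbit point $(\exp(f)v_{\om_k}^a)_k$ is exactly the evaluation of the corresponding element of $(R_d)_\la$. (Equivalently, one can read this off from Proposition~\ref{idealaskernel1}: the image $\tau(P)$ is $z^\la$ times $P$ evaluated on the orbit parametrization in the $c_{i,j}$.)

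For the $E_{w\la}$-side, I would use that the duality of the statement, once the symmetric-product basis in $T_\la^a$ is normalized consistently, is the evaluation pairing
\[\langle v_1^{a_1}\otimes\cdots\otimes v_{n-1}^{a_{n-1}},\,P\rangle = P(v_1,\ldots,v_{n-1}).\]
Under this pairing, the value of $P$ at $(\exp(f)v_{\om_k}^a)_k$ equals $\langle\exp(f)t_\la^a,P\rangle$, because $\exp(f)$ acts on $T_\la^a$ as the tensor product of its actions on each $L_{\om_k}^a$ extended to symmetric powers, so $\exp(f)t_\la^a=\bigotimes_k(\exp(f)v_{\om_k}^a)^{a_k}$. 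Next, since $\fn_{-w}^a$ is a finite-dimensional abelian Lie algebra acting nilpotently on $t_\la^a$ (the $\grad^a$-grading is bounded on the finite-dimensional module generated by $t_\la^a$), the assignment $f\mapsto\exp(f)t_\la^a$ is polynomial in the coefficients of $f$, and its Taylor coefficients exhaust all PBW monomials $\prod(f^a_{i_s,j_s})^{k_s}t_\la^a$; hence $\spanned\{\exp(f)t_\la^a:f\in\fn_{-w}^a\}=\mU(\fn_{-w}^a)t_\la^a=E_{w\la}$. Therefore $P\perp E_{w\la}$ iff $\langle\exp(f)t_\la^a,P\rangle=0$ for all $f\in\fn_{-w}^a$ iff $P$ vanishes on the orbit, completing the proof.

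The main subtlety is aligning the ``bijection of bases'' duality in the statement with the evaluation pairing used above: the naive declaration of dual basis (symmetric-product basis vectors paired $1$ to the corresponding Plücker monomials) differs from the evaluation pairing by multinomial factors $\binom{a_k}{m_{I_1,k},\ldots}$, and this scaling \emph{does} in general affect the orthogonal subspace; one must choose the basis normalization that makes the two pairings coincide. Once this bookkeeping is in place, the remainder is a direct transcription of the ``vanishing on orbit closure = orthogonal to the affine cone's linear span'' principle.
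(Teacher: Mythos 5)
Your proof is correct and follows essentially the same route as the paper's, with one place where you are in fact more careful.

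The paper's argument has exactly the same two ingredients you isolate: (i) under the Segre embedding $\bP_d\subset\bP(T_\la^a)$, an element of $(R_d)_\la$ vanishes as a linear form on $T_\la^a$ at a point of $\bP_d$ if and only if it vanishes as a polynomial in the Plücker coordinates of that point, so $(J_{w,d})_\la$ consists precisely of the linear forms vanishing on $Y_{w,d}$; (ii) the linear span of the affine cone over $Y_{w,d}$ is $E_{w\la}$, because $E_{w\la}$ is cyclic over $\mU(\fn_{-w}^a)$. The paper phrases (ii) projectively ("$\bP(E)$ is the minimal projective subspace containing $Y_{w,d}$, which is $\bP(E_{w\la})$") and does not unroll the exponential, whereas you explicitly verify $\spanned\{\exp(f)t_\la^a\}=\mU(\fn_{-w}^a)t_\la^a$ by expanding the (terminating) exponential and appealing to characteristic zero. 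That is the same fact, just made explicit.

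Where you add genuine value is the normalization caveat. The paper defines the duality via a bijection of bases (a symmetric monomial in $T_\la^a$ paired to $1$ with the corresponding monomial in $(R_d)_\la$), and then asserts without comment that ``vanishing as a linear form at $x\in\bP_d\subset\bP(T_\la^a)$'' coincides with ``vanishing as a polynomial in the Plücker coordinates of $x$.'' As you note, this identification is only literally true with the right choice of basis for the symmetric power: with the ``symmetric-algebra'' basis $b_{I_1}\cdots b_{I_a}$ it fails by multinomial factors, while with the ``sum over distinct permutations'' basis of symmetric tensors in $L_{\om_k}^{a}\subset U_\la^a$ the two pairings agree. Since the orthogonal of a subspace does depend on such a rescaling in general, flagging and fixing this is appropriate; the paper glosses over it. (Downstream uses of this proposition only exploit dimension counts and inclusion reversal, so the choice of normalization does not affect the conclusions of Propositions~\ref{hdual} and~\ref{injiff}, but your care is warranted for the statement as written.)
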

\begin{proof}
We have identified $(R_d)_\la$ with the space of linear forms on $\bP(T_\la^a)$, i.e.\ $H^0(\bP(T_\la^a),\mathcal O_{\bP(T_\la^a)}(1))$. Note that the image of the embedding $\bP_d\subset\bP(U_\la^a)$ is contained in $\bP(T_\la^a)\subset\bP(U_\la^a)$. By the definition of the Segre embedding, $p\in (R_d)_\la$ vanishes as a linear form in a point in $x\in\bP_d\subset\bP(T_\la^a)$, if and only if $p$ vanishes as polynomial in the Pl\"ucker coordinates of $x$.

Let $E$ be the subspace in $T_\la^a$ dual to $(J_{w,d})_\la$. Then $\bP(E)$ is the minimal projective subspace in $\bP(T_\la^a)$ which contains $Y_{w,d}$ but this is precisely $\bP(E_{w\la})$ because $E_{w\la}$ is cyclic.
\end{proof}

Next, the subgroup $N_{-w}=\exp(\fn_{-w})\subset N_-$ defines a Schubert variety $\widetilde X_{w,d}=\overline{N_{-w}v_\la}\subset F_d$, this subvariety is isomorphic to $X_{w,d}$ but is embedded differently into $F_d$ and, subsequently, $P_d$. The embedding $\widetilde X_{w,d}\subset P_d$ provides an ideal $\widetilde I_{w,d}\subset R_d$.

Also note that the PBW filtration on $L_\la$ induces a filtration on $\widetilde D_{w\la}$, the associated graded space $H_{w\la}$ is a $\fn_{-w}^a$-submodule in $L_\la^a$ that contains $E_{w\la}$.
\begin{proposition}\label{hdual}
The subspace $(\initial_{\grad^a}\widetilde I_{w,d})_\la\subset (R_d)_\la$ is the orthogonal of the subspace $H_{w\la}\subset T_\la^a$.
\end{proposition}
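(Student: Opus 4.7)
The plan is to mirror the proof of Proposition~\ref{edual}, but starting from the classical (non-degenerate) Schubert variety $\widetilde X_{w,d}$ and then passing to initial and associated graded objects. Let $T_\la=\Sym^{a_1}(L_{\om_1})\otimes\cdots\otimes\Sym^{a_{n-1}}(L_{\om_{n-1}})$ be the non-degenerate analog of $T_\la^a$, with the Cartan embedding $L_\la\hookrightarrow T_\la$ sending $v_\la$ to $v_{\om_1}^{a_1}\otimes\cdots\otimes v_{\om_{n-1}}^{a_{n-1}}$. The same basis indexing as in the paper gives a duality between $(R_d)_\la$ and $T_\la$ alongside the one with $T_\la^a$.

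First I would establish the non-degenerate counterpart of Proposition~\ref{edual}: $(\widetilde I_{w,d})_\la=\widetilde D_{w\la}^\perp$ inside $(R_d)_\la$. The argument repeats that of Proposition~\ref{edual}: the Segre--Veronese image of $\widetilde X_{w,d}\subset P_d\subset\bP(T_\la)$ has linear span $\bP(\widetilde D_{w\la})$, since the orbit $N_{-w}\bm v_\la$ lifts to $\exp(\fn_{-w})v_\la$ whose span is the cyclic module $\mU(\fn_{-w})v_\la=\widetilde D_{w\la}$. The corresponding ideal component is then the orthogonal $\widetilde D_{w\la}^\perp$.

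Next, I would invoke the general fact that associated graded commutes with orthogonal complement: for any subspace $U$ of a filtered space $V$, one has $(\gr U)^\perp=\gr(U^\perp)$ under the dual filtration on $V^*$. Applied to $V=T_\la$ (PBW filtration, $\gr V=T_\la^a$) and $U=\widetilde D_{w\la}$ (so $\gr U=H_{w\la}$), this yields $H_{w\la}^\perp=\gr\bigl((\widetilde I_{w,d})_\la\bigr)$. The principle is immediate in the adapted basis of $T_\la$ formed by symmetric products of the $b_{i_1,\dots,i_k}$'s: the PBW filtration on $T_\la$ dualizes exactly to the $\grad^a$-filtration on $(R_d)_\la$.

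Finally, since $R_d$ is a domain and $\widetilde I_{w,d}$ is $\deg$-homogeneous, the $\la$-component of the initial ideal is spanned by $\{\initial_{\grad^a}(p):p\in(\widetilde I_{w,d})_\la\}$, and these initial components are precisely the classes realizing the graded pieces of the filtered space $(\widetilde I_{w,d})_\la$; hence $\gr\bigl((\widetilde I_{w,d})_\la\bigr)=(\initial_{\grad^a}\widetilde I_{w,d})_\la$. Combining the three steps gives the claim. The main technical ingredient is the first step, where the cyclicity of $\widetilde D_{w\la}$ is essential---and this is precisely why one must pass through the classical picture, since $H_{w\la}$ itself is in general not cyclic (a phenomenon directly tied to the paper's counterexamples), so the cyclic-span argument of Proposition~\ref{edual} cannot be applied to $H_{w\la}$ directly.
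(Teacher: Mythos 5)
Your proposal is structurally the same as the paper's proof: both compare the classical picture $\widetilde D_{w\la}\subset T_\la$ and its orthogonal $(\widetilde I_{w,d})_\la$ with the graded/initial picture. However, there is a gap at the step ``$U=\widetilde D_{w\la}$ (so $\gr U=H_{w\la}$)'', which you assert without justification. By definition, $H_{w\la}$ is the associated graded of $\widetilde D_{w\la}$ with respect to the PBW filtration \emph{induced from $L_\la$}, i.e.\ $(\widetilde D_{w\la})\cap(L_\la)_{\le m}$. Your argument instead needs to take the associated graded with respect to the filtration on $T_\la$ (defined by transporting the $\grad^a$-grading from $T_\la^a$ via the basis bijection --- note this is not literally a ``PBW filtration'', since $T_\la$ is not cyclic). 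For these two associated graded objects to coincide, one must prove $L_\la\cap(T_\la)_{\le m}=(L_\la)_{\le m}$. This does not follow formally: the inclusion $\supset$ holds because $f_{i,j}$ raises the $T_\la$-degree by at most one, but the reverse inclusion requires knowing that $\dim\bigl(L_\la\cap(T_\la)_{\le m}\bigr)=\dim(L_\la)_{\le m}$, which in turn uses the cyclicity of $L_\la$ inside $T_\la$ and of $L_\la^a$ inside $T_\la^a$ together with Theorem~\ref{cartancomp}. You correctly emphasize the role of cyclicity of $\widetilde D_{w\la}$ in the first step, but it is the cyclicity of $L_\la\subset T_\la$ that saves the second step, and this needs to be spelled out. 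Once this is added, your argument is complete and matches the paper's in substance.
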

\begin{proof}
We may consider the space \[T_\la=\Sym^{a_1}(L_{\omega_1})\otimes\ldots\otimes \Sym^{a_{n-1}}(L_{\omega_{n-1}})\] with the linear isomorphism $\tau:T_\la^a\to T_\la$  which identifies the corresponding products of the $b_{i_1,\ldots,i_k}^a$ and the $b_{i_1,\ldots,i_k}$. We obtain a $\grad^a$-grading on $T_\la$ via $(T_\la)_m=\tau((T_\la^a)_m)$ and a filtration $(T_\la)_{\le m}=\bigoplus_{l\le m}(T_\la)_l$. We see that $f_{i,j}$ maps $(T_\la)_{\le m}$ to $(T_\la)_{\le m+1}$, therefore the associated graded space $\mathrm{gr}T_\la$ is a $\fn_-^a$-module. By comparing the action of $f_{i,j}^a$ on $\mathrm{gr}T_\la$ and $T_\la^a$, we see that the two are isomorphic as $\fn_-^a$-modules. The fact that the highest weight vector in $T_\la$ generates $L_\la$ while that in $T_\la^a$ generates $L_\la^a$ implies that $L_\la\cap(T_\la)_{\le m}=(L_\la)_{\le m}$.

From the above we see that the subspace $L_\la^a\subset T_\la^a$ is spanned by all vectors of the form $\tau^{-1}(\initial_{\grad^a}(v))$ where $v\in L_\la\subset T_\la$ and $\initial_{\grad^a}(v)$ is the nonzero $\grad^a$-homogeneous component of $v$ with the \emph{largest} grading. Consequently, $H_{w\la}\subset L_\la^a\subset T_\la^a$ is spanned by $\initial_{\grad^a}(v)$ with $v\in\widetilde D_{w\la}\subset L_\la\subset T_\la$.

The above description of $H_{w\la}$ together with the fact that $\widetilde D_{w\la}\subset T_\la$ is dual to $(\widetilde I_{w,d})_\la\subset(R_d)_\la$ implies that $H_{w\la}$ and $(\initial_{\grad^a}\widetilde I_{w,d})$ do indeed annihilate each other. The proposition then follows for dimensional reasons.
\end{proof}

We can now embed $Y_{w,d}$ into a Gr\"obner degeneration of $\widetilde X_{w,d}$ (i.e. a variety cut out by an initial ideal of $\widetilde I_{w,d}$).
\begin{proposition}\label{injiff}
We have $\initial_{\grad^a}\widetilde I_{w,d}\subset J_{w,d}$, i.e.\ $Y_{w,d}$ is a subvariety of the variety cut out by $\initial_{\grad^a}\widetilde I_{w,d}$.
The map $\varphi_{w\la}$ is injective if and only if $(\initial_{\grad^a}\widetilde I_{w,d})_\la=(J_{w,d})_\la$. 
\end{proposition}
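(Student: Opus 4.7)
The plan is to combine the two dualities provided by Propositions~\ref{edual} and~\ref{hdual} with the elementary observation that $E_{w\la}\subset H_{w\la}$ as subspaces of $L_\la^a\subset T_\la^a$. This inclusion is immediate from the definitions: $E_{w\la}=\mU(\fn_{-w}^a)v_\la^a$ is by construction the cyclic $\fn_{-w}^a$-submodule generated by $v_\la^a$, while $H_{w\la}$ was already noted (immediately before Proposition~\ref{hdual}) to be an $\fn_{-w}^a$-submodule of $L_\la^a$ containing $v_\la^a$, hence containing $E_{w\la}$.

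For the first statement I would observe that Propositions~\ref{edual} and~\ref{hdual} apply verbatim to any weight $\mu\in\bigoplus_k\bZ_{\ge 0}\om_{d_k}$ in place of $\la$, with analogous objects $E_{w\mu}\subset H_{w\mu}\subset T_\mu^a$ and the analogous perfect pairing between $T_\mu^a$ and $(R_d)_\mu$. Taking orthogonals in that pairing reverses the inclusion to $(\initial_{\grad^a}\widetilde I_{w,d})_\mu\subset (J_{w,d})_\mu$. Both ideals in question are $\deg$-homogeneous with nonzero components only for such $\mu$, so summing over $\mu$ yields the ideal containment.

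For the second statement, specializing the same duality to $\mu=\la$ shows that the equality $(\initial_{\grad^a}\widetilde I_{w,d})_\la=(J_{w,d})_\la$ of $\la$-components is equivalent to $E_{w\la}=H_{w\la}$, which, since $E_{w\la}\subset H_{w\la}$ is automatic, reduces to comparing dimensions. The remaining step is a dimension count: $H_{w\la}$ being an associated graded space of $\widetilde D_{w\la}$ satisfies $\dim H_{w\la}=\dim \widetilde D_{w\la}=\dim \widetilde D_{w\la}^a$, while the image $E_{w\la}=\varphi_{w\la}(\widetilde D_{w\la}^a)$ has the same dimension precisely when $\varphi_{w\la}$ is injective. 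The main subtlety is simply keeping the two associated-graded incarnations of $\widetilde D_{w\la}$ apart: the intrinsic $\widetilde D_{w\la}^a$ built from the PBW filtration of $\mU(\fn_{-w})$, versus the ambient $H_{w\la}$ built from the restricted PBW filtration of $\mU(\fn_-)$; these coincide exactly when $\varphi_{w\la}$ is injective, and the proposition is asserting that this coincidence is already detected by a single perfect pairing in $\deg$-component~$\la$.
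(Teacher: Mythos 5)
Your proof is correct and follows essentially the same route as the paper's: use the inclusion $E_{w\la}\subset H_{w\la}$ together with the dualities of Propositions~\ref{edual} and~\ref{hdual}, then conclude by comparing dimensions via $\dim H_{w\la}=\dim\widetilde D_{w\la}^a$. Your explicit remark that the ideal containment must be checked in every $\deg$-component $\mu$, not just $\la$, makes precise what the paper leaves implicit in calling the first claim ``immediate.''
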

\begin{proof}
The first claim is immediate from Propositions~\ref{edual} and~\ref{hdual} in view of $E_{w\la}\subset H_{w\la}$. Now, the map $\varphi_{w\la}$ is injective if and only if $\dim E_{w\la}=\dim D_{w\la}$. However, $\dim H_{w\la}=\dim D_{w\la}$ and Propositions~\ref{edual} and~\ref{hdual} imply \[\dim(\initial_{\grad^a}\widetilde I_{w,d})_\la+\dim H_{w\la}=\dim(J_{w,d})_\la+\dim E_{w\la}.\] The second claim follows. 
\end{proof}

Now we can prove the following.
\begin{cor}\label{tfae}
The following are equivalent.
\begin{enumerate}[label=(\alph*)]
\item The map $\varphi_{w\mu}$ is injective for all $\mu\in\Pi^+$ with $d(\mu)=d$.
\item $\initial_{\grad^a}\widetilde I_{w,d}=J_{w,d}$.
\item $\initial_{\grad^a}\widetilde I_{w,d}$ is prime.
\end{enumerate}
\end{cor}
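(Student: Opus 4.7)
I would establish that (a), (b), (c) are equivalent by proving (b)$\Rightarrow$(a), (b)$\Leftrightarrow$(c), and (a)$\Rightarrow$(b), using Proposition~\ref{injiff} as the bridge between injectivity of $\varphi_{w\mu}$ and equality of multidegree components of the two ideals.

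The easy implications come first. (b)$\Rightarrow$(a) is immediate from Proposition~\ref{injiff} by restricting to each $\mu$ with $d(\mu)=d$. For (b)$\Rightarrow$(c), $Y_{w,d}$ is the closure of an orbit of the connected group $N_{-w}^a$ and therefore irreducible, so its vanishing ideal $J_{w,d}$ is prime; under (b) this transfers to $\initial_{\grad^a}\widetilde I_{w,d}$. For (c)$\Rightarrow$(b), primality of $\initial_{\grad^a}\widetilde I_{w,d}$ says that it cuts out an irreducible subscheme $Z\subseteq P_d$, and by the first part of Proposition~\ref{injiff} one has $Y_{w,d}\subseteq Z$. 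Gr\"obner degeneration preserves the multigraded Hilbert function, so $\dim Z=\dim\widetilde X_{w,d}=\dim X_{w\la}^a=\dim Y_{w,d}$, the last equality by Proposition~\ref{biratprop}; two irreducible varieties of the same dimension with one contained in the other coincide, and since $\initial_{\grad^a}\widetilde I_{w,d}$ is radical, it equals the vanishing ideal $J_{w,d}$.

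For (a)$\Rightarrow$(b), Proposition~\ref{injiff} already yields $(\initial_{\grad^a}\widetilde I_{w,d})_\mu=(J_{w,d})_\mu$ whenever $d(\mu)=d$. The remaining multidegrees $\mu$, with $d(\mu)=d'\subsetneq d$, would be handled by passing through the coordinate projection $P_d\to P_{d'}$: this projection carries $Y_{w,d}$ onto $Y_{w,d'}$ and $\widetilde X_{w,d}$ onto $\widetilde X_{w,d'}$, identifying the $\mu$-components of the ideals for $d$ with those for $d'$. Proposition~\ref{injiff} applied with $d'$ in place of $d$ then converts $\mu$-component equality into the injectivity of $\varphi_{w\mu}$, so that (b) becomes the a~priori stronger condition that $\varphi_{w\mu}$ is injective for every $\mu\in\Pi^+$ with $d(\mu)\subseteq d$.

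The main obstacle is thus to deduce this stronger monotone statement from (a). For a given $\nu$ with $d(\nu)\subsetneq d$ I would set $\sigma_0=\sum_{j\in d\setminus d(\nu)}\om_j$ and consider the family $\mu_k=\nu+k\sigma_0$, for which $d(\mu_k)=d$ for every $k\ge 1$. The Cartan-component inclusions $\widetilde D_{w\mu_k}^a\hookrightarrow\widetilde D_{w\nu}^a\otimes\widetilde D_{w(k\sigma_0)}^a$ and $E_{w\mu_k}\hookrightarrow E_{w\nu}\otimes E_{w(k\sigma_0)}$ constrain the involved dimensions, while (a) forces $\dim E_{w\mu_k}=\dim\widetilde D_{w\mu_k}^a$ for every such $k$. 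Both quantities agree with Hilbert polynomials in $k$ for $k$ large, and this polynomial identity should propagate down to $\dim E_{w\nu}=\dim\widetilde D_{w\nu}^a$, yielding injectivity of $\varphi_{w\nu}$. Making this asymptotic extraction rigorous is the technical core of the argument.
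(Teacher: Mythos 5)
Your treatment of $(b)\Rightarrow(a)$, $(b)\Rightarrow(c)$, and $(c)\Rightarrow(b)$ coincides with the paper's: $(b)\Rightarrow(a)$ is immediate from Proposition~\ref{injiff}, $(b)\Rightarrow(c)$ uses irreducibility of the orbit closure $Y_{w,d}$, and $(c)\Rightarrow(b)$ uses the flat Gr\"obner family (which equates $\dim Z$ and $\dim\widetilde X_{w,d}$), the equality $\dim\widetilde X_{w,d}=\dim Y_{w,d}$ (the paper gets this from equality of stabilizer dimensions noted in the proof of Proposition~\ref{biratprop}; you route it through $\dim X^a_{w\la}$, which is fine), and the fact that a proper irreducible closed subvariety cannot have full dimension.

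The difference, and the genuine gap, is in $(a)\Rightarrow(b)$. You correctly observe that Proposition~\ref{injiff} only equates $(\initial_{\grad^a}\widetilde I_{w,d})_\mu$ and $(J_{w,d})_\mu$ for $\mu$ with $d(\mu)=d$, whereas $(b)$ also requires agreement in multidegrees $\nu$ with $d(\nu)\subsetneq d$. The paper passes over this entirely, stating only that ``$(a)$ is equivalent to $(b)$'' by citing Proposition~\ref{injiff}; the coordinate-projection reduction to $d'=d(\nu)$ and the attempted asymptotic extraction with the family $\mu_k=\nu+k\sigma_0$ are your own. But you have not closed that loop: knowing $\dim E_{w\mu_k}=\dim\widetilde D^a_{w\mu_k}$ for all $k\ge1$ gives equality of two functions of $k$ which are \emph{eventually} polynomial, and you would still need to justify that this polynomial identity descends to $k=0$, i.e.\ that the ``small $k$'' behaviour is governed by the same polynomial. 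This is precisely what you flag as ``the technical core,'' and it remains unproven. An alternative you do not explore is to argue $(a)\Rightarrow(c)$ directly (so that the remaining work is your completed $(c)\Rightarrow(b)$), for instance by showing that $\initial_{\grad^a}\widetilde I_{w,d}$ is saturated so that agreement with $J_{w,d}$ on the cofinal set $\{\mu: d(\mu)=d\}$ forces equality; but this too is not established, and saturation of the initial ideal is not obvious without knowing primeness in advance. As it stands, $(a)\Rightarrow(b)$ is missing in your write-up, so the three conditions are not shown to be equivalent.
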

\begin{proof}
By the second part of Proposition~\ref{injiff}, (a) is equivalent to (b). Since $Y_{w,d}$ is irreducible (as an orbit closure), (b) implies (c). 

Now, suppose that $\initial_{\grad^a}\widetilde I_{w,d}$ is prime. On one hand, the stabilizers $(N_{-w})_{\bm v_\la}$ and $(N_{-w}^a)_{\bm v_\la^a}$ have the same dimension (see proof of Proposition~\ref{biratprop}), and so do the orbit closures $\widetilde X_{w,d}$ and $Y_{w,d}$. On the other, denote $Z$ the variety cut out by $\initial_{\grad^a}\widetilde I_{w,d}$. By a standard construction (see, for instance,~\cite[Corollary 3.2.6]{HH}), we have a flat family over $\mathbb A^1$ with the fiber over 0 being $Z$ and with all other fibers being $\widetilde X_{w,d}$. By flatness we have $\dim Z=\dim \widetilde X_{w,d}=\dim Y_{w,d}$ but we know that $Y_{w,d}\subset Z$ which together with the irreducibility of both implies (b). 
\end{proof}

\begin{remark}\label{n45}
Condition (c) in the above theorem has been verified for $n\le 5$ and all $w$ with the use of the computer algebra system Macaulay2 (\cite{M2}). However, as will be shown below, the conditions (a), (b) and (c) may fail when $n\ge 6$. It should be pointed out that computing initial ideals and checking them for primeness appears to be rather resource intensive when $n\ge 6$. Instead, the counterexamples in Section~\ref{counterex} were found with the use of a more complicated program written in SageMath (\cite{S}). This program constructed a basis in $E_{w\la}$ incrementally by emulating the actions of the $f_{i,j}^a$ on the space $U_\la^a$ and then compared the dimensions of $E_{w\la}$ and $D_{w\la}$.
\end{remark}

Before we proceed with our counterexamples let us discuss a construction that generalizes FFLV bases and the monomial bases obtained in~\cite[Theorem 1]{fourier}. The below strategy of constructing monomial bases (or, at least, linearly independent subsets) is standard in the theory of PBW degenerations. 

Denote $\gamma$ the bijection from the set of monomials in the $f_{i,j}^a$ (spanning $\mU(\fn_{-w}^a)$) to the semigroup $\bZ_{\ge0}^{l(w)}$ which takes a monomial to its exponent vector. Choose a total monomial order $\le$ on the universal enveloping algebra, this is the same as choosing a total semigroup order (also $\le$) on the semigroup.

Now, $E_{w\om_k}\subset L_{\om_k}^a$ is spanned by vectors of the form $b_{i_1,\ldots,i_k}^a$. For every $b_{i_1,\ldots,i_k}^a\in E_{w\om_k}$ choose the $\le$-minimal monomial $M_{i_1,\ldots,i_k}\in\mU(\fn_{-w}^a)$ such that $M_{i_1,\ldots,i_k} v_\la^a$ is a nonzero multiple of $b_{i_1,\ldots,i_k}^a$. Let $\Gamma_{\om_k}$ be the set of $\gamma(M_{i_1,\ldots,i_k})$ for all $b_{i_1,\ldots,i_k}^a\in E_{w\om_k}$ (this set has size $\dim D_{w\om_k}$).

Define the Minkowski sum 
\begin{equation}\label{minksum}
\Gamma_\la=\underbrace{\Gamma_{\om_1}+\ldots+\Gamma_{\om_1}}_{a_1}+\ldots+\underbrace{\Gamma_{\om_{n-1}}+\ldots+\Gamma_{\om_{n-1}}}_{a_{n-1}}\subset\bZ_{\ge0}^{l(w)}. 
\end{equation}
And consider the set of monomials $\mathcal M_{\la}=\gamma^{-1}(\Gamma_\la)$. 

\begin{proposition}\label{linind}
The set $\{Mv_\la^a,M\in\mathcal M_\la\}$ is linearly independent. 
\end{proposition}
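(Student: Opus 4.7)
The plan is to lift everything to the Cartan component realization. Since the cyclic submodule $\mU(\fn_-^a) t_\la^a \subset T_\la^a$ is isomorphic to $L_\la^a$ with $t_\la^a \leftrightarrow v_\la^a$ (as observed just before Proposition~\ref{edual}), it suffices to establish linear independence of $\{M t_\la^a : M \in \mathcal M_\la\}$ in $T_\la^a$. Because $\fn_-^a$ is abelian, its coproduct is primitive and each $f_{i,j}^a$ acts on tensor products and symmetric powers by the Leibniz rule. Lifting $t_\la^a$ to $u_\la^a \in U_\la^a = \bigotimes_k (L_{\om_k}^a)^{\otimes a_k}$ (where the tensor factors are distinguished) and writing $M = \prod (f_{i,j}^a)^{m_{i,j}}$ with exponent vector $\bm m$, the multinomial expansion yields
\[
M u_\la^a = \sum_{(\bm m^{(k,s)}) : \sum \bm m^{(k,s)} = \bm m} c_{(\bm m^{(k,s)})} \bigotimes_{k,s} M^{(k,s)} v_{\om_k}^a,
\]
the sum being over all distributions of $\bm m$ among the $(k,s)$-positions (with $s = 1, \ldots, a_k$), with strictly positive integer multinomial coefficients $c$ and $M^{(k,s)} = \gamma^{-1}(\bm m^{(k,s)})$. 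Projecting to $T_\la^a$ by symmetrization within each $k$ produces $M t_\la^a$, and each nonzero summand yields (up to a positive scalar) a monomial basis tensor $\bigotimes_k \prod_s b^a_{(k,s)}$ of $T_\la^a$ with $b^a_{(k,s)} \in E_{w\om_k}$.

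The key device is a map $\Psi$ from the monomial basis of $T_\la^a$ into $\Gamma_\la$ defined by
\[
\Psi\Bigl(\bigotimes_k \prod_s b^a_{(k,s)}\Bigr) = \sum_{k,s} \gamma(M_{b^a_{(k,s)}}) \in \Gamma_\la,
\]
which is well-defined by symmetry in the $s$-indices and lands in $\Gamma_\la$ by the Minkowski-sum definition. For any summand of $M u_\la^a$ the $\le$-minimality of $M_{b^a_{(k,s)}}$ in the definition of $\Gamma_{\om_k}$ gives $\bm m^{(k,s)} \ge \gamma(M_{b^a_{(k,s)}})$; summing and invoking the semigroup property of $\le$,
\[
\gamma(M) = \sum \bm m^{(k,s)} \ge \sum \gamma(M_{b^a_{(k,s)}}) = \Psi(\text{summand}).
\]
Hence the support of $M t_\la^a$ lies in $\{\Psi \le \gamma(M)\}$. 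Equality $\Psi = \gamma(M)$ forces $\bm m^{(k,s)} \in \Gamma_{\om_k}$ for every $(k,s)$, and such a Minkowski decomposition exists precisely because $\gamma(M) \in \Gamma_\la$; each such distribution contributes a basis tensor with strictly positive coefficient, so these contributions cannot cancel and the $\Psi = \gamma(M)$ component of $M t_\la^a$ is nonzero.

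A standard leading-term argument concludes the proof. Suppose $\sum_M c_M M t_\la^a = 0$ with some $c_M \ne 0$, and let $M^*$ be the $\le$-maximal element of $\{M : c_M \ne 0\}$, unique since $\gamma$ is a bijection. Projecting the relation onto the $\Psi = \gamma(M^*)$ component annihilates every other $M t_\la^a$ (as $\gamma(M) < \gamma(M^*)$ places the support of $M t_\la^a$ inside $\{\Psi < \gamma(M^*)\}$), leaving $c_{M^*}$ times a nonzero vector, a contradiction. The main obstacle here is the careful coproduct bookkeeping in the first paragraph: one must track how symmetrization combines distributions contributing to the same basis tensor and verify that the positive integer multinomial coefficients (nonvanishing in characteristic zero) preclude any cancellation in the $\Psi$-maximal component.
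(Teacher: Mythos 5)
Your proof is correct and follows essentially the same route as the paper's: the paper also reduces to the Cartan component, introduces the same grading (denoted $\hat\gamma$ there, your $\Psi$) by summing the $\gamma(M_{i_1,\ldots,i_k})$, and concludes via the top-graded component. The only cosmetic difference is that the paper works in the tensor product $U_\la^a$ rather than the symmetric space $T_\la^a$, which makes the no-cancellation step immediate (distinct distributions give distinct basis tensors) and sidesteps the point you flag at the end; also note the coefficients $c_{(\bm m^{(k,s)})}$ can carry signs from the wedge action, so "strictly positive" should read "of a common sign for contributions to a fixed symmetric monomial," which still suffices.
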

\begin{proof}
%This is shown by a standard argument variations of which are found in numerous papers about PBW degenerations. To be more specific, if the order $\le$ respects the grading $\grad^a$, i.e.\ any monomial is smaller than any other monomial with a greater PBW degree (which will hold for the order we define below), we can make use of the results in~\cite{favourable}. Indeed, extend the order to all of $\mU(\fn_-^a)$ in such a way that within a given PBW degree any monomial containing some $f_{i,j}^a\notin\fn_{-w}^a$ is greater than any monomial containing no such $f_{i,j}^a$.
By Theorem~\ref{cartancomp} we are to show that the set $\mathcal M_\la u_\la^a$ is linearly independent. Consider the subspace \[S_\la=E_{\omega_1}^{\otimes a_1}\otimes\ldots\otimes E_{\omega_{n-1}}^{\otimes a_{n-1}}\subset U_\la^a,\] evidently $\mathcal M_\la u_\la^a\subset S_\la$. We may define a $\bZ_{\ge0}^{l(w)}$-grading $\hat\gamma$ on $S_\la$ by letting the grading of any tensor product of the $b_{i_1,\ldots,i_k}^a\in E_{w\omega_k}$ be the sum of the corresponding $\gamma(\mathcal M_{i_1,\ldots,i_k})$. Now, if we take $M\in\mathcal M_\la$ and expand $Mv_\la^a$ into a sum of such tensor products, the summands in the result will either lie in the $\hat\gamma$-homogeneous component of grading $\gamma(M)$ or in a component of some other grading $\delta$ with $\delta<\gamma(M)$. At least one of the former summands occurs with a nonzero coefficient and the linear independence follows.
\end{proof}

We fix a specific order which from now on will be denoted $\le$, the definitions of $\Gamma_\la$ and $\mathcal M_\la$ will now be understood with respect to this order. This order will be a graded lexicographic order with respect to a certain ordering of all $f_{i,j}^a\in\fn_{-w}^a$. This means that for two monomials we have $M<M'$ if and only if $\grad^a(M)<\grad^a(M')$ or $\grad^a(M)=\grad^a(M')$ and the first $f_{i,j}^a$ in our ordering that $M$ and $M'$ contain in unequal degrees is contained in $M$ in the lesser degree. In the ordering we choose the $f_{i,j}^a$ are sorted by $i+j$ increasing and within a fixed $i+j$ by $j$ increasing.

\begin{remark}\label{rembases}
For $w=\id$ the set $\mathcal M_\la$ provides the FFLV basis in $L_\la^a$ (\cite[Theorem 1.5]{FFL1}) and, more generally, for triangular $w$ this gives the basis constructed in~\cite[Theorem 1]{fourier}. In fact, for $n\le 5$ and all $\la$ with coordinates no greater than 3 it has been verified computationally that $|\Gamma_\la|=\dim D_{w\la}$ and, therefore, the monomials in $\mathcal M_\la$ provide a basis in $\widetilde D_{w\la}^a$. One could speculate that this always holds when $n\le 5$ but, again, we will show that this fails when $n=6$. Furthermore, in the cases we consider we will have $|\Gamma_\la|=\dim E_{w\la}$. This, however, fails for $n=6$ and $\la=(1,1,1,1,1)$.
\end{remark}

%These modules can also be obtained in a context similar to Theorem~\ref{cartancomp}.
%

%
%We will make use of the following slight modification of Theorem~\ref{cartancomp}. Denote \[T_\la^a=\Sym^{a_1}(L_{\omega_1}^a)\otimes\ldots\otimes \Sym^{a_{n-1}}(L_{\omega_{n-1}}^a)\] and \[t_\la^a=(v_{\omega_1}^a)^{a_1}\otimes\ldots\otimes (v_{\omega_{n-1}}^a)^{a_{n-1}}\in T_\la^a.\] Since $T_\la^a$ is embedded into $U_\la$ as the space of tensors that are symmetric in the according sense, we immediately obtain
%\begin{proposition}
%The $\fn_-^a$-submodule $\mU(\fn_-^a)t_\la^a\subset T_\la^a$ is isomorphic to $L_\la^a$.
%\end{proposition} 

%For an integral dominant $\la$ and $w\in W$ consider the vector \[u_{w\la}=v_{w\omega_1}^{\otimes a_1}\otimes\ldots\otimes v_{w\omega_{n-1}}^{\otimes a_{n-1}}\in U_\la.\] This vector is contained $\mU(\fn_-)u_\la\cong L_\la$ and is the image of $v_{w\la}$

\section{Counterexamples}\label{counterex}

Within this section we set 
\begin{align*}
n&=6,\\ w&=[6,4,2,5,3,1],\\ \la&=(1,1,0,1,1),\\ \mu&=(2,1,0,1,1),\\ d=d(\la)=d(\mu)&=(1,2,4,5)
\end{align*} 
where $w$ is written in one-line notation, i.e.\ as $[w(1),\ldots,w(n)]$.

We say that two varieties with a $N_{-w}^a$-action are isomorphic as $N_{-w}^a$-varieties if their exists an $N_{-w}^a$-equivariant isomorphism between them. Our main theorem, to the proof of which we devote this section is as follows.
%\begin{theorem}\label{notY}
%The map $\varphi_{w\la}$ is not injective. Moreover, $X_{w\la}^a$ is not isomorphic to $Y_{w,d}$ as a $N_{-w}^a$-variety.
%\end{theorem}
\begin{theorem}\label{notsame}
$X_{w\la}^a$ is not isomorphic to $X_{w\mu}^a$ as a $N_{-w}^a$-variety.
\end{theorem}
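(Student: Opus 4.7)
The plan is to establish a sharp dichotomy between $\la$ and $\mu$: the Cartan component homomorphism $\varphi_{w\mu}$ will be an isomorphism, whence Proposition~\ref{biratprop} produces an $N_{-w}^a$-equivariant isomorphism $X_{w\mu}^a\cong Y_{w,d}$; by contrast $\varphi_{w\la}$ will have nontrivial kernel. The theorem then reduces to showing that no $N_{-w}^a$-equivariant isomorphism between $X_{w\la}^a$ and $Y_{w,d}$ can exist.

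To produce the dichotomy we exploit the Minkowski-sum construction underlying Proposition~\ref{linind}. From the explicit description of $E_{w\om_k}$ in Proposition~\ref{fundamental} we enumerate the monomial-exponent sets $\Gamma_{\om_k}$; the Minkowski sums $\Gamma_\la$ and $\Gamma_\mu$ given by~(\ref{minksum}) are then combinatorial, and their sizes are computed by a computer (which is feasible in view of $\dim D_{w\la}=2942$ and $\dim D_{w\mu}=8226$). The two key facts are $|\Gamma_\mu|=\dim D_{w\mu}$ and $|\Gamma_\la|<\dim D_{w\la}$. For $\mu$, Proposition~\ref{linind} together with the surjection $\varphi_{w\mu}\colon\widetilde D_{w\mu}^a\twoheadrightarrow E_{w\mu}$ squeezes $|\Gamma_\mu|\le\dim E_{w\mu}\le\dim\widetilde D_{w\mu}^a=\dim D_{w\mu}$ to an equality throughout, so $\varphi_{w\mu}$ is an isomorphism. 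For $\la$ we independently verify, by an incremental construction of $E_{w\la}$ that emulates the action of the $f^a_{i,j}$ on $U_\la^a$ as in Remark~\ref{n45}, that $\dim E_{w\la}=|\Gamma_\la|$; hence $\dim E_{w\la}<\dim\widetilde D_{w\la}^a$ and $\varphi_{w\la}$ has nontrivial kernel.

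To close the argument, suppose for contradiction that $\phi\colon X_{w\la}^a\to Y_{w,d}$ is an $N_{-w}^a$-equivariant isomorphism. After composing with a translation by an element of the abelian group $N_{-w}^a$, we may assume $\phi(\widetilde{\bm v}_\la^a)=\bm v_\la^a$, so that $\phi$ coincides with the rational map $\bm\varphi_{w\la}$ on the dense orbit; by separatedness of $Y_{w,d}$ this forces $\phi$ to be the unique morphism extending $\bm\varphi_{w\la}$. Because $\bm\varphi_{w\la}$ is the projectivization of the linear map $\varphi_{w\la}$, the extension $\phi$ pulls back $\mathcal O(1)_{Y_{w,d}}$ to $\mathcal O(1)_{X_{w\la}^a}$ and thus lifts to a $\bC^*$- and $N_{-w}^a$-equivariant morphism of affine cones $\widehat X_{w\la}^a\to\widehat Y_{w,d}$ induced by $\varphi_{w\la}|_{\widehat X_{w\la}^a}$. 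Since both cones are non-degenerate in their respective ambient spaces (being orbit closures of cyclic generators), the degree-$1$ pieces of their graded coordinate rings are the ambient duals $(\widetilde D_{w\la}^a)^*$ and $E_{w\la}^*$, and such a cone isomorphism would force $\dim\widetilde D_{w\la}^a=\dim E_{w\la}$, contradicting the dichotomy.

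The main obstacle is the final passage from a projective $N_{-w}^a$-equivariant isomorphism to an isomorphism of the specific affine cones lifted by $\varphi_{w\la}$; rigorous justification requires, beyond the pullback of $\mathcal O(1)$, a projective-normality-type statement ensuring that the global sections of the induced line bundles indeed recover the ambient duals. A more robust alternative is to produce an explicit element of $\ker\varphi_{w\la}$ whose projectivization lies in $X_{w\la}^a$, directly obstructing the extension of $\bm\varphi_{w\la}$ to a morphism; this vector can be located via the same incremental algorithm used to compute $E_{w\la}$, turning the non-isomorphism into a finite, checkable assertion.
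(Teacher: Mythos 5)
Your proposed dichotomy is false, and this breaks the argument at its foundation. You claim that $\varphi_{w\mu}$ is an isomorphism (hence $X_{w\mu}^a\cong Y_{w,d}$) while $\varphi_{w\la}$ is not, based on the asserted computation $|\Gamma_\mu|=\dim D_{w\mu}$. But the actual computation gives $|\Gamma_\mu|=8221$ while $\dim D_{w\mu}=8226$, so $\varphi_{w\mu}$ is far from injective: its kernel has dimension $5$ (Lemma~\ref{mulemma} in the paper), while $\ker\varphi_{w\la}$ has dimension $1$ (Lemma~\ref{lalemma}). Neither of $X_{w\la}^a$, $X_{w\mu}^a$ coincides with $Y_{w,d}$ (see Remark~\ref{fixed}), so the plan of showing one is $Y_{w,d}$ and the other is not cannot succeed. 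Your fallback suggestion (produce an element of $\ker\varphi_{w\la}$ whose projectivization lies in $X_{w\la}^a$, obstructing extension of $\bm\varphi_{w\la}$) suffers from the same problem: it would distinguish $X_{w\la}^a$ from $Y_{w,d}$, but the same obstruction exists for $\mu$, so it says nothing about distinguishing $X_{w\la}^a$ from $X_{w\mu}^a$.

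The paper's actual argument must work harder precisely because both maps fail to be injective. It enriches the structure by the natural torus $T=T_0\times\bC^*$ acting on $\widetilde D_{w\nu}^a$ (weight grading plus $\grad^a$) and observes that a putative $N_{-w}^a$-equivariant isomorphism may be normalized to also be $T$-equivariant. It then considers, for generic $c$, the limit point $\lim_\nu(c)$ of $\exp(f_{tc})\widetilde{\bm v}_\nu^a$ as $t\to\infty$; whether this point is $T$-fixed depends on whether the top $\grad^a$-homogeneous component of $\widetilde D_{w\nu}^a$ is a single $T_0$-weight space. By identifying these top components with the respective kernels, it finds that $(\widetilde D_{w\la}^a)_7=\ker\varphi_{w\la}$ is one-dimensional (so $\lim_\la(c)$ is $T$-fixed) whereas $(\widetilde D_{w\mu}^a)_8=\ker\varphi_{w\mu}$ is $5$-dimensional and spanned by vectors of pairwise distinct $T_0$-weights (so $\lim_\mu(c)$ is not $T$-fixed). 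This is the invariant that actually separates the two varieties. Beyond the factual error, your closing affine-cone step is also shaky on its own terms: $\bm\varphi_{w\la}$ is only a birational map, not a morphism on all of $X_{w\la}^a$, so the passage to a global pullback of $\mathcal O(1)$ and then to a cone isomorphism inducing $\varphi_{w\la}$ needs more than separatedness, as you yourself note.
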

\begin{remark}
Throughout the theory of PBW degenerations the degenerate varieties are defined as orbit closures and considered together with the action of the corresponding group, therefore we consider Theorem~\ref{notsame} to be sufficient for our purposes. However, SageMath computations have shown that the number of points fixed by the action of a certain torus is different in $X_{w\la}^a$ and in $X_{w\mu}^a$ (see also Remark~\ref{fixed}). This shows that $X_{w\la}^a$ and $X_{w\mu}^a$ have different Euler characteristics and are not isomorphic as algebraic varieties and even as topological spaces.
\end{remark}
%Of course, Theorem~\ref{notsame} implies that at least one of $X_{w\la}^a$ and $X_{w\mu}^a$ is not isomorphic to $Y_{w,d}$. However, we still prove Theorem~\ref{notY} separately for the following reasons. First of all, the theorem and its proof will be of help in the proof of Theorem~\ref{notsame} (the main result). Second, Theorem~\ref{notY} was discovered earlier than Theorem~\ref{notsame} and was somewhat unexpected in its own right. Finally, Theorem~\ref{notY} will be proved completely by hand while our proof of Theorem~\ref{notsame} will require a certain amount of computer assistance.

Our strategy of showing that two varieties are not isomorphic will, nonetheless, be to compare the actions of the mentioned torus, let us introduce these actions. The spaces $\widetilde D_{w\la}^a$, $\widetilde D_{w\mu}^a$, $L_\la^a$ and $L_\mu^a$ admit natural weight space decompositions, i.e.\ are equipped with an $\fh$-action and an action of the maximal torus $T_0=\exp(\fh)\subset G$. Furthermore, the grading $\grad^a$ induces an additional $\bC^*$-action by $t\in \bC^*$ acting as $t^{\grad^a}$. We obtain an action of the $n$-dimensional torus $T=T_0\times\bC^*$. 

Now, the Lie algebra $\fn_{-w}^a$ is also acted upon by $T$ (where all $\grad^a(f_{i,j}^a)=1$) and so is the Lie group $N_{-w}^a$. This means that the spaces $\widetilde D_{w\la}^a$, $\widetilde D_{w\mu}^a$, $L_\la^a$ and $L_\mu^a$ and their projectivizations are acted upon by the semidirect product $T\ltimes N_{-w}^a$. Since $T$ fixes $\widetilde{\bm v}_{\la}^a$, $\widetilde{\bm v}_{\mu}^a$ and $\bm v_\la^a$, the open $N_{-w}^a$-orbits in $X_{w\la}^a$, $X_{w\mu}^a$ and $Y_{w,d}$ are $T$-invariant and so are the orbit closures themselves.

\begin{proposition}
If $X_{w\la}^a$ and $X_{w\mu}^a$ are isomorphic as $N_{-w}^a$-varieties, then there exists a $N_{-w}^a$-equivariant isomorphism between them that is also $T$-equivariant and maps $\widetilde{\bm v}_{\la}^a$ to $\widetilde{\bm v}_{\mu}^a$.
\end{proposition}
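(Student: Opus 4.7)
The plan is as follows. Starting with any $N_{-w}^a$-equivariant isomorphism $\phi\colon X_{w\la}^a\to X_{w\mu}^a$, I would first correct it to send $\widetilde{\bm v}_\la^a$ to $\widetilde{\bm v}_\mu^a$, and then use a twisting argument based on the semidirect product $T\ltimes N_{-w}^a$ to deduce that the corrected map is automatically $T$-equivariant.

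\textbf{Step 1: fixing the base point.} Each of $X_{w\la}^a$ and $X_{w\mu}^a$ is the closure of a single $N_{-w}^a$-orbit, which is the unique open $N_{-w}^a$-orbit (by irreducibility). Since $\phi$ is a $N_{-w}^a$-equivariant isomorphism, it sends open orbit to open orbit, so $\phi(\widetilde{\bm v}_\la^a)=g\widetilde{\bm v}_\mu^a$ for some $g\in N_{-w}^a$. Replacing $\phi$ by $x\mapsto g^{-1}\phi(x)$ — still $N_{-w}^a$-equivariant because $N_{-w}^a$ is abelian — I may assume $\phi(\widetilde{\bm v}_\la^a)=\widetilde{\bm v}_\mu^a$.

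\textbf{Step 2: twisting by $T$.} For each $t\in T$ set $\phi_t(x):=t^{-1}\phi(tx)$; this is a morphism $X_{w\la}^a\to X_{w\mu}^a$ because both varieties are $T$-stable. Since $T$ normalizes $N_{-w}^a$ inside $T\ltimes N_{-w}^a$, a short verification shows $\phi_t$ is again $N_{-w}^a$-equivariant; and since $T$ fixes the two base points, $\phi_t(\widetilde{\bm v}_\la^a)=\widetilde{\bm v}_\mu^a$.

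\textbf{Step 3: uniqueness.} The target is to show $\phi_t=\phi$ for all $t$, which is literally the $T$-equivariance of $\phi$. The mechanism is a uniqueness statement: any $N_{-w}^a$-equivariant morphism $X_{w\la}^a\to X_{w\mu}^a$ taking $\widetilde{\bm v}_\la^a$ to $\widetilde{\bm v}_\mu^a$ must send $h\widetilde{\bm v}_\la^a$ to $h\widetilde{\bm v}_\mu^a$ on the open orbit, and is then uniquely extended by irreducibility. The only nontrivial check is that this rule is well-defined — equivalently, that $\widetilde{\bm v}_\la^a$ and $\widetilde{\bm v}_\mu^a$ have the same stabilizer in $N_{-w}^a$.

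The main obstacle, accordingly, is the equal-stabilizer claim, and this is where the hypothesis $d(\la)=d(\mu)$ is genuinely used. I would argue as in the proof of Proposition~\ref{biratprop}: since $\fn_{-w}^a$ is abelian and acts on $\widetilde D_{w\la}^a$ by nilpotent operators which strictly raise the $\grad^a$-grading (with $\widetilde v_\la^a$ sitting in grading $0$), the condition $\exp(\xi)\widetilde{\bm v}_\la^a=\widetilde{\bm v}_\la^a$ forces $\xi\widetilde v_\la^a=0$ by separating graded components. The vectors $f_{i,j}^a\widetilde v_\la^a$ lie in pairwise distinct $\fh$-weight spaces, so this annihilator is the span of those $f_{i,j}^a\in\fn_{-w}^a$ with $f_{i,j}v_\la=0$, a condition depending only on the parabolic determined by $d(\la)$. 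Since $d(\la)=d(\mu)$, the two stabilizers coincide, the uniqueness holds, and the proposition follows. Were this equality to fail, even a base-point-fixing $N_{-w}^a$-equivariant morphism would not be unique and the twisting argument would collapse.
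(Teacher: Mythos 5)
Your Steps 1 and 2 track the paper's argument: Step 1 is verbatim the paper's normalization, and the ``twisting'' map $\phi_t(x)=t^{-1}\phi(tx)$ in Step 2 is just a repackaging of the paper's direct verification that $\theta'(t(g'\widetilde{\bm v}_\la^a))=t\theta'(g'\widetilde{\bm v}_\la^a)$ on the open orbit (via $t(g'\widetilde{\bm v}_\la^a)=(tg't^{-1})\widetilde{\bm v}_\la^a$), followed by passing to the closure. So the overall route is essentially the paper's.

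However, the framing of Step 3 introduces a spurious ``main obstacle.'' You claim the nontrivial point is well-definedness of the rule $h\widetilde{\bm v}_\la^a\mapsto h\widetilde{\bm v}_\mu^a$, requiring equal stabilizers. But you are not \emph{defining} a map by this rule; $\phi$ and $\phi_t$ already exist as morphisms, and $N_{-w}^a$-equivariance together with $\phi(\widetilde{\bm v}_\la^a)=\widetilde{\bm v}_\mu^a$ \emph{forces} $\phi(h\widetilde{\bm v}_\la^a)=h\phi(\widetilde{\bm v}_\la^a)=h\widetilde{\bm v}_\mu^a$ for every $h$, and likewise for $\phi_t$, with no well-definedness issue whatsoever. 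Hence $\phi=\phi_t$ on the dense open orbit and then everywhere. (Incidentally, the equal-stabilizer fact is automatic \emph{given} that the isomorphism $\phi$ exists, and it is not where $d(\la)=d(\mu)$ is used; that hypothesis just ensures the two varieties live in a comparable setting. The stabilizer discussion, while true, is imported from the proof of Proposition~\ref{biratprop} and plays no logical role here.) The conclusion is correct, but the diagnosis of where the difficulty lies is not.
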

\begin{proof}
Suppose that $\theta:X_{w\la}^a\to X_{w\mu}^a$ is an $N_{-w}^a$-equivariant isomorphism. Since each of the varieties contains a single open $N_{-w}^a$-orbit, we have $\theta(\widetilde{\bm v}_{\la}^a)=g\widetilde{\bm v}_{\mu}^a$ for some $g\in N_{-w}^a$. Now consider the map $\theta'$ with $\theta'(x)=g^{-1}\theta(x)$. Since $N_{-w}^a$ is abelian, $\theta'$ will be a $N_{-w}^a$-equivariant isomorphism mapping $\widetilde{\bm v}_{\la}^a$ to $\widetilde{\bm v}_{\mu}^a$.

Let us show that $\theta'$ is also $T$-equivariant. Indeed, the action of $T$ on $N_{-w}^a\widetilde{\bm v}_{\la}^a$ is given by $t(g'\widetilde{\bm v}_{\la}^a)=(tg't^{-1})\widetilde{\bm v}_{\la}^a$ for $t\in T$ and $g'\in N_{-w}^a$ (and $tg't^{-1}\in N_{-w}^a$). The same formula holds for $\mu$ replacing $\la$ and, since $\theta'(g'\widetilde{\bm v}_{\la}^a)=g'\widetilde{\bm v}_{\mu}^a$ for any $g'\in N_{-w}^a$, we see that $\theta'$ is $T$-equivariant when restricted to the open orbit. The claim follows.
\end{proof}

For a tuple of complex numbers $c=(c_{i,j}|f_{i,j}\in\fn_{-w})$ denote $f_c=\sum c_{i,j}f_{i,j}^a\in\fn_{-w}^a$. For a $\nu\in\Pi^+$ choose a basis in $\widetilde D_{w\nu}^a$ consisting of $T$-weight vectors and consider the coordinates of $\exp(f_c)\widetilde v_{\nu}^a$. The coordinate corresponding to a vector $v$ in our basis will be a homogeneous polynomial in the $c_{i,j}$ of degree $\grad^a(v)$. For a generic $c$ none of these coordinates vanish. 

Consider the maximal $m$ such that the $\grad^a$-homogeneous component $(\widetilde D_{w\nu}^a)_m$ is nonzero. One may define the limit of $\exp(f_{tc})\widetilde{\bm v}_{\nu}^a\in X_{w\nu}^a$ as $t\in\bC$ approaches infinity. The homogeneous coordinate of this point corresponding to a vector $v$ in the chosen bases will be zero if $\grad^a(v)<m$ it will equal the corresponding polynomial in the $c_{i,j}$ (or any $tc_{i,j}$) whenever $\grad^a(v)=m$. We denote this point $\lim_\nu(c)$. For $c$ generic in the above sense, the point $\lim_\nu(c)$ is $T$-fixed if and only if all vectors in $(\widetilde D_{w\nu}^a)_m$ have the same $T_0$-weight. We now fix a $c$ that is generic in this sense for both $\nu=\la$ and $\nu=\mu$. We will show that $\lim_\la(c)$ is $T$-fixed while $\lim_\mu(c)$ is not. %To be more precise, consider the one-parameter subgroup \[N_c=\{\exp(f_{tc})|t\in\bC\}\]. For a generic $c$ this subgroup will act freely on $\widetilde{\bm v}_{\nu}^a$ and the orbit closure $\overline{N_c\widetilde{\bm v}_{\nu}^a}$ will be isomorphic to $\bC\bP^1$. This orbit closure will contain a single point outside of  

To show that $\lim_\la(c)$ is $T$-fixed we show that $\dim((\widetilde D_{w\nu}^a)_7)=1$ while all $(\widetilde D_{w\nu}^a)_m$ with $m>7$ are zero. This will be done by identifying this one-dimensional subspace with the kernel of $\varphi_{w\la}$. The first step will be showing that $\dim(\ker\varphi_{w\la})\le 1$, i.e.\ that $\dim E_{w\la}\ge\dim\widetilde D_{w\la}^a-1$. This is where we will require a certain amount of computer assistance.

First of all, we note that $\dim\widetilde D_{w\la}^a=\dim D_{w\la}$. Tools for finding dimensions of Demazure modules can be found in LiE (\cite{LiE}) and SageMath (\cite{S}) and provide $\dim D_{w\la}=2942$.
\begin{lemma}\label{dim2941}
The dimension of $E_{w\la}$ is at least 2941.
\end{lemma}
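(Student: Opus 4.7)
The plan is to apply Proposition~\ref{linind} and bound $\dim E_{w\la}$ from below by the cardinality of the Minkowski sum $\Gamma_\la$ defined in~\eqref{minksum}. Every $M\in\mathcal M_\la$ is a monomial in $\mU(\fn_{-w}^a)$, so each $Mv_\la^a$ lies in $E_{w\la}=\mU(\fn_{-w}^a)v_\la^a$; the linear independence of $\{Mv_\la^a: M\in\mathcal M_\la\}$ from Proposition~\ref{linind} then yields
\[
|\Gamma_\la|=|\mathcal M_\la|\le\dim E_{w\la}.
\]
The lemma thus reduces to the purely combinatorial statement $|\Gamma_\la|\ge 2941$.

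To set up the Minkowski sum I would first build each $\Gamma_{\om_k}$ for $k\in\{1,2,4,5\}$. Proposition~\ref{fundamental} supplies an explicit combinatorial description of the basis $\{b_{i_1,\ldots,i_k}^a\}$ of $E_{w\om_k}$, and its proof already exhibits, for each such basis vector, a specific product of $f_{i,j}^a\in\fn_{-w}^a$ whose action on $v_{\om_k}^a$ produces a nonzero scalar multiple of $b_{i_1,\ldots,i_k}^a$; in particular, the required $\grad^a$-degree equals $s_{i_1,\ldots,i_k}$. Among the monomials of that degree in $\mU(\fn_{-w}^a)$ with the same property one then picks the $\le$-minimum by direct enumeration and records its exponent vector, obtaining $\Gamma_{\om_k}$ as an explicit finite subset of $\bZ_{\ge0}^{l(w)}$. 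The injectivity part of Proposition~\ref{fundamental} guarantees $|\Gamma_{\om_k}|=\dim D_{w\om_k}$.

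The remaining, and main, step is to compute the four-term Minkowski sum $\Gamma_{\om_1}+\Gamma_{\om_2}+\Gamma_{\om_4}+\Gamma_{\om_5}\subset\bZ_{\ge0}^{l(w)}$ and verify that its size is at least $2941$. This is the only genuinely computational ingredient, and is the sort of ``finding Minkowski sums of certain small sets'' noted in the introduction as the bare minimum of computer assistance required: each $\Gamma_{\om_k}$ has at most $\binom{6}{k}$ elements, so the enumeration is essentially instantaneous. The hard part is not the enumeration itself but the absence of a slicker argument — one must actually carry out the combinatorics, since the analogous equality $|\Gamma_\la|=\dim E_{w\la}$ is known by Remark~\ref{rembases} to fail for some nearby inputs (namely $n=6$, $\la=(1,1,1,1,1)$), so no general principle is available. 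Combined with the trivial upper bound $\dim E_{w\la}\le\dim\widetilde D_{w\la}^a=\dim D_{w\la}=2942$, the computed bound pins $\dim E_{w\la}$ to either $2941$ or $2942$, which is exactly what the lemma asserts and what the subsequent argument needs in order to identify $\ker\varphi_{w\la}$ with the one-dimensional top $\grad^a$-component of $\widetilde D_{w\la}^a$.
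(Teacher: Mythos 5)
Your proposal is correct and follows the paper's own proof essentially exactly: both reduce the lemma to Proposition~\ref{linind} via the lower bound $\dim E_{w\la}\ge|\Gamma_\la|$, then compute the Minkowski sum $\Gamma_{\om_1}+\Gamma_{\om_2}+\Gamma_{\om_4}+\Gamma_{\om_5}$ built from the explicit generators of Proposition~\ref{fundamental} (the paper lists the $\mathcal M_{\om_k}$ explicitly; you describe the same construction). The only genuine difference is that you add the harmless observation $\dim E_{w\la}\le\dim D_{w\la}=2942$, which is not needed for the lemma itself but usefully foreshadows Lemma~\ref{lalemma}.
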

\begin{proof}
The size of the set $\Gamma_\la$ as defined in Section~\ref{general} can be computed to equal 2941 and the lemma follows from Proposition~\ref{linind}.

For the sake of completeness we write out the relevant sets $\mathcal M_{\om_k}$ with the use of Proposition~\ref{fundamental}.
%{\setlength{\abovedisplayskip}{0pt}}
\begin{flalign*}
\mathcal M_{\om_1}=&\{1,f_{1,2}^a,f_{1,3}^a,f_{1,4}^a,f_{1,5}^a,f_{1,6}^a\},&\\
\mathcal M_{\om_2}=&\{1,f_{2,3}^a,f_{2,5}^a,f_{2,6}^a,f_{1,3}^a,f_{1,4}^a,f_{1,5}^a,f_{1,6}^a,f_{1,4}^af_{2,3}^a,f_{1,5}^af_{2,3}^a,f_{1,6}^af_{2,3}^a,f_{1,4}^af_{2,5}^a,&\\&f_{1,4}^af_{2,6}^a,f_{1,6}^af_{2,5}^a\},&\\
\mathcal M_{\om_4}=&\{1,f_{4,5}^a,f_{4,6}^a,f_{3,6}^a,f_{3,6}^af_{4,5}^a,f_{2,5}^a,f_{2,6}^a,f_{2,6}^af_{4,5}^a,f_{2,5}^af_{3,6}^a,f_{1,5}^a,f_{1,6}^a,f_{1,6}^af_{4,5}^a,&\\&f_{1,5}^af_{3,6}^a,f_{1,6}^af_{2,5}^a\},&\\
\mathcal M_{\om_5}=&\{1,f_{5,6}^a,f_{4,6}^a,f_{3,6}^a,f_{2,6}^a,f_{1,6}^a\}.&
\end{flalign*}
Herefrom the size of the set $\Gamma_\la$ is easily found to be 2941 with a computing system or programming language of choice.
\end{proof}

\begin{lemma}\label{lalemma}
We have $\dim(\ker\varphi_{w\la})=1$. This kernel coincides with $(\widetilde D_{w\nu}^a)_7$ while all $(\widetilde D_{w\nu}^a)_m$ with $m>7$ are zero.
\end{lemma}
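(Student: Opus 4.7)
The plan is to squeeze $\ker\varphi_{w\la}$ between a dimension bound and a grading bound, and then exhibit a single grade-$7$ element to conclude.

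First, since $\dim\widetilde D_{w\la}^a=\dim D_{w\la}=2942$ and $\dim E_{w\la}\ge 2941$ by Lemma~\ref{dim2941}, we have $\dim\ker\varphi_{w\la}\le 1$. Next, by Theorem~\ref{cartancomp}, $L_\la^a$ embeds into $L_{\om_1}^a\otimes L_{\om_2}^a\otimes L_{\om_4}^a\otimes L_{\om_5}^a$, and in each factor the maximum of $s_{i_1,\ldots,i_k}=|\{j:i_j>k\}|$ is $\min(k,6-k)$; for $k=1,2,4,5$ this gives $1,2,2,1$, which sum to $6$. Hence $(L_\la^a)_m=0$ for $m\ge 7$, and since $\varphi_{w\la}$ preserves $\grad^a$, every element of $(\widetilde D_{w\la}^a)_m$ with $m\ge 7$ lies in $\ker\varphi_{w\la}$; combined with the previous bound, $\sum_{m\ge 7}\dim(\widetilde D_{w\la}^a)_m\le 1$.

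It remains to exhibit a nonzero element of $(\widetilde D_{w\la}^a)_7$; this will force $\dim(\widetilde D_{w\la}^a)_7=1$, $(\widetilde D_{w\la}^a)_m=0$ for $m>7$, and $\ker\varphi_{w\la}=(\widetilde D_{w\la}^a)_7$. Since $|\Gamma_\la|=2941<2942=\dim\widetilde D_{w\la}$, there exists a unique weight $\eta_0$ at which $\dim(\widetilde D_{w\la})_{\eta_0}$ strictly exceeds the number of monomials in $\mathcal M_\la$ producing vectors of weight $\eta_0$. The ``missing'' vector at $\eta_0$ lies outside the span of $\mathcal M_\la v_\la^a$, and since every element of $\mathcal M_\la$ has $\grad^a\le 1+2+2+1=6$, we aim to show that producing this extra vector requires a PBW monomial of degree $7$ in $\fn_{-w}^a$.

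The main obstacle is precisely this local check. It amounts to analyzing, for the single weight $\eta_0$, the PBW filtration $(\widetilde D_{w\la})_{\le m,\eta_0}$ as $m$ varies, and showing that the filtration strictly grows from $m=6$ to $m=7$. Here the concrete structure of $\fn_{-w}^a$ plays a central role: it lacks the simple root vector $f_{3,4}^a$, forcing any $\alpha_3$-contribution to be carried by a root of length at least $3$; this ``length inflation'' in $\fn_{-w}^a$ compared to the full $\fn_-^a$ is what accounts for the extra grade-$7$ vector in $\widetilde D_{w\la}^a$. The verification is facilitated by the same Minkowski-sum data as in Lemma~\ref{dim2941}, and only requires examining a single weight space.
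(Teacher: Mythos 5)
Your proof contains a genuine gap at the crucial step. Establishing $\dim\ker\varphi_{w\la}\le 1$ from Lemma~\ref{dim2941}, and establishing that $(\widetilde D_{w\la}^a)_m\subseteq\ker\varphi_{w\la}$ for $m\ge 7$ because $(L_\la^a)_m=0$ there, is fine and matches the paper. But your argument never actually proves that $\ker\varphi_{w\la}\ne 0$, and this is the heart of the lemma.

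The observation that $|\Gamma_\la|=2941<2942=\dim\widetilde D_{w\la}$ only yields a \emph{lower} bound $\dim E_{w\la}\ge 2941$; it is perfectly consistent with $\dim E_{w\la}=2942$, in which case $\ker\varphi_{w\la}=0$ and the set $\mathcal M_\la$ simply fails to be a basis (such a failure is warned about in Remark~\ref{rembases} for $\la=(1,1,1,1,1)$). So there is no ``missing vector at a unique weight $\eta_0$'' to analyze: a priori, the missing monomial might be irrelevant to the kernel. To conclude $\dim\ker\varphi_{w\la}\ge 1$ you would need an \emph{upper} bound on $\dim E_{w\la}$, which a linear-independence construction cannot provide. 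The paper obtains this upper bound via the dual picture: it produces an explicit degree-$6$ element $Q\in (J_{w,d})_\la\setminus(\initial_{\grad^a}\widetilde I_{w,d})_\la$ and invokes Propositions~\ref{edual},~\ref{hdual} and~\ref{injiff} to conclude $\dim E_{w\la}<\dim H_{w\la}=\dim\widetilde D_{w\la}$. The $\grad^a$-degree of $Q$ then also pins down that the kernel sits in degree $7$. Your sketch at the end gestures at analyzing a single weight space and invokes a heuristic about $f_{3,4}^a$ being absent from $\fn_{-w}^a$, but this is not carried out and, even if it were, it would not by itself show that $\varphi_{w\la}$ has nontrivial kernel (as opposed to $\mathcal M_\la$ being too small).

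As a secondary point: you also assert that the kernel, once shown to be $1$-dimensional, must equal $(\widetilde D_{w\la}^a)_7$ once a nonzero degree-$7$ vector is found; this logic is fine, but it still depends on the missing existence claim. The paper instead locates the degree of the kernel by a filtration argument comparing $(\widetilde D_{w\la})_{\le m}$ and $\widetilde D_{w\la}\cap(L_\la)_{\le m}$, which sidesteps the need to directly construct a degree-$7$ vector.
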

\begin{proof}
Our first step will be showing that $\dim(\ker\varphi_{w\la})\ge 1$ which together with Lemma~\ref{dim2941} will imply the first claim. We do this by making use of Proposition~\ref{injiff}, i.e.\ by producing an element in $(J_{w,d})_\la\backslash(\initial_{\grad^a}\widetilde I_{w,d})_\la$. Explicitly, this element is \[Q=X_6 X_{4,5} X_{2,4,5,6} X_{1,3,4,5,6} - X_5 X_{4,6} X_{1,4,5,6} X_{2,3,4,5,6}.\]

The fact that $Q\in J_{w,d}$ is verified using Proposition~\ref{idealaskernel1}. One easily finds that $p^w_6=z_{1,6}$, $p^w_{4,5}=z_{1,4}z_{2,5}$, $p^w_{2,4,5,6}=-z_{1,5}z_{3,6}$, $p^w_{1,3,4,5,6}=-z_{2,6}$, $p^w_5=z_{1,5}$, $p^w_{4,6}=z_{1,4}z_{2,6}$, $p^w_{1,4,5,6}=z_{2,5}z_{3,6}$, $p^w_{2,3,4,5,6}=z_{1,6}$.

To show that $Q\notin\initial_{\grad^a}\widetilde I_{w,d}$ we make use of another ``weight'' grading on $R_d$. This $\bZ^n$-grading $\wt$ is given by the $k$th coordinate of $\wt(X_{i_1,\ldots,i_{d_k}})$ being equal to 1 if $k\in\{i_1,\ldots,i_{d_k}\}$ and 0 otherwise. Note that such a grading can be viewed as a $(\bC^*)^n$-action on $\bP$. The image of the diagonal embedding $\bC^*\in(\bC^*)^n$ then acts trivially and we obtain an action of $(\bC^*)^n/\bC^*$ which is the action of the maximal torus $T_0$. Hence, the ideals we consider are $\wt$-homogeneous.

Now, the ideal $\widetilde I_{w,d}$ is known (see, for instance,~\cite[Remark 9]{KM}) to be generated by $I_d$ and the variables $X_{i_1,\ldots,i_{d_k}}$ for which there is an $1\le l\le d_k$ such that the $l$th smallest number among $w(i_1),\ldots,w(i_k)$ is greater than the $l$th smallest number among $w(1),\ldots,w(k)$. (In other words, such $X_{i_1,\ldots,i_{d_k}}$ that $b_{i_1,\ldots,i_{d_k}}\notin \widetilde D_{w\om_{d_k}}$). In our case we obtain two such variables: $X_{1,4}$ and $X_{1,2,4,5}$. 

However, it is easily checked that there are no monomials in $(R_d)_\la$ that have the same $\wt$-grading as $Q$ and are divisible by either $X_{1,4}$ or $X_{1,2,4,5}$. This means that if we have $Q\in\initial_{\grad^a}\widetilde I_{w,d}$, then we must have $Q\in\initial_{\grad^a}\widetilde I_d$. But the latter is disproved with the use of Proposition~\ref{idealaskernel}. We have $p_{4,5}=z_{1,4}z_{2,5}-z_{1,5}z_{2,4}$ which divides none of $p_5$, $p_{4,6}$, $p_{1,4,5,6}$ and $p_{2,3,4,5,6}$.

We have shown that the kernel is one-dimensional and we let $q$ be a spanning vector. 

Now, since $\grad^a(Q)=6$ we see that $\dim((J_{w,d})_\la)_6>\dim((\initial_{\grad^a}\widetilde I_{w,d})_\la)_6$. In fact, Propositions~\ref{edual} and~\ref{hdual} imply that \[\dim(J_{w,d})_\la-\dim(\initial_{\grad^a}\widetilde I_{w,d})_\la=\dim H_{w\la}-\dim E_{w\la}=\dim\widetilde D_{w\la}-\dim E_{w\la}=1.\] Hence \[\dim((J_{w,d})_\la)_6-\dim((\initial_{\grad^a}\widetilde I_{w,d})_\la)_6=1\] while $\dim((J_{w,d})_\la)_m=\dim((\initial_{\grad^a}\widetilde I_{w,d})_\la)_m$ for all other $m$. Consequently, \\$\dim(E_{w\la})_m<\dim(H_{w\la})_m$ if and only if $m=6$.

Denote $\grad^a(q)=r$. By the definition of $\varphi_{w\la}$, the fact that $\varphi_{w\la}(q)=0$ means that the component $(\widetilde D_{w\la})_{\le r-1}$ of the PBW filtration with respect to $\fn_{-w}$ is strictly smaller than $\widetilde D_{w\la}\cap(L_\la)_{\le r-1}$ (component of the PBW filtration with respect to $\fn_-$). The fact that $\varphi_{w\la}$ is injective when restricted to any $(\widetilde D_{w\la}^a)_m$ with $m\neq r$ means that $(\widetilde D_{w\la})_{\le m-1}=\widetilde D_{w\la}\cap(L_\la)_{\le m-1}$. Setting $m=r-1$ we obtain 
\begin{multline*}
\dim(E_{w\la})_{r-1}=\dim(\widetilde D_{w\la}^a)_{r-1}=\dim(\widetilde D_{w\la})_{\le r-1}-\dim(\widetilde D_{w\la})_{\le r-2}<\\\dim(\widetilde D_{w\la}\cap(L_\la)_{\le r-1})-\dim(\widetilde D_{w\la}\cap(L_\la)_{\le r-2})=\dim(H_{w\la})_{r-1}.
\end{multline*}
In view of the above this immediately provides $r-1=6$.

Finally, observe that the $(L_\la^a)_m=0$ and hence $(E_{w\la})_m=0$ whenever $m>6$. This is since the highest $\grad^a$ grading occurring in $L_\nu^a$ is additive with respect to $\nu$ (due, for instance, to the Minkowski sum property of the FFLV polytopes, see~\cite[Proposition 3.7]{FFL1}) and for $L_{\om_k}^a$ this highest grading is $\min(k,n-k)$ (since that is the maximal value of the right-hand side of~(\ref{grada})). Therefore, for any $v\in\widetilde D_{w\la}^a$ with $\grad^a(v)>6$ we have $\varphi_{w\la}(v)=0$, i.e.\ $q$ and its multiples are the only such $v$.
%Choose a preimage $q_0$ of $q$ in the component $(\widetilde D_{w\la})_r$ of the PBW filtration with respect to $\fn_{-w}$. The fact that $q\neq 0$ but $\varphi_{w\la}(q)=0$ means that $q_0\notin(\widetilde D_{w\la})_{r-1}$ but $q_0\in(L_\la)_s\backslash(L_\la)_{s-1}$ (components of the PBW filtration with respect to $\fn_-$) for some $s<r$. Let $q_0$ project to $q'\in(L_\la^a)_s$, we see that $q_0\in(L_\la)_s\cap\widetilde D_{w\la}$, hence $q'\in H_{w\la}$. However, we have $q'\notin E_{w\la}$ otherwise we would have $\varphi_{w\la}(q)=q'\neq 0$
\end{proof}

Now let us see what can be said about $\widetilde D_{w\mu}^a$.

\begin{lemma}\label{mulemma}
We have $\dim(\ker\varphi_{w\mu})=5$. This kernel coincides with $(\widetilde D_{w\mu}^a)_8$ and is spanned by 5 vectors with pairwise distinct $T_0$-weights.
\end{lemma}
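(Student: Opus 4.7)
The plan is to mirror the proof of Lemma~\ref{lalemma}, establishing the three assertions — that $\ker\varphi_{w\mu}$ has dimension $5$, sits in $\grad^a=8$, and decomposes into $5$ distinct $T_0$-weight spaces — by combining Proposition~\ref{linind} with a direct production of five elements of $(J_{w,d})_\mu$ modulo $(\initial_{\grad^a}\widetilde I_{w,d})_\mu$. First I would obtain the upper bound $\dim\ker\varphi_{w\mu}\le 5$: a routine Minkowski sum computation of $\Gamma_\mu$ using the sets $\mathcal M_{\om_k}$ recorded in the proof of Lemma~\ref{dim2941} should yield $|\Gamma_\mu|=\dim D_{w\mu}-5=8221$. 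By Propositions~\ref{edual} and~\ref{hdual}, together with Proposition~\ref{linind}, this translates into $\dim\ker\varphi_{w\mu}=\dim H_{w\mu}-\dim E_{w\mu}\le\dim D_{w\mu}-|\Gamma_\mu|=5$.

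Second, I would establish the matching lower bound by producing five suitable elements. Since $\mu=\la+\om_1$ and $J_{w,d}$ is an ideal, multiplying the element $Q\in(J_{w,d})_\la$ from the proof of Lemma~\ref{lalemma} by any Pl\"ucker coordinate $X_i$, $1\le i\le 6$, yields an element of $(J_{w,d})_\mu$. I would take $R_i=Q\cdot X_i$ for $i=2,3,4,5,6$; each has $\grad^a(R_i)=\grad^a(Q)+\grad^a(X_i)=6+1=7$, and the $R_i$ carry pairwise distinct $T_0$-weights (inherited from the basis vectors $b_i$), which alone forces their linear independence. The verification $R_i\notin\initial_{\grad^a}\widetilde I_{w,d}$ mimics the $\wt$-grading trick of Lemma~\ref{lalemma}: one checks that no monomial in $(R_d)_\mu$ of the $\wt$-weight of $R_i$ is divisible by either $X_{1,4}$ or $X_{1,2,4,5}$, reducing the problem to $R_i\notin\initial_{\grad^a}I_d$, which in turn follows via Proposition~\ref{idealaskernel} from the fact that $p_{4,5}=z_{1,4}z_{2,5}-z_{1,5}z_{2,4}$ divides none of the factor polynomials $p_i$, $p_5$, $p_{4,6}$, $p_{1,4,5,6}$, $p_{2,3,4,5,6}$ appearing in $R_i$.

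Combining the two bounds gives $\dim\ker\varphi_{w\mu}=5$, and the fact that every $R_i$ has $\grad^a=7$ places the entire gap $H_{w\mu}/E_{w\mu}$ in $\grad^a=7$. Running the filtration argument of Lemma~\ref{lalemma} now with $r-1=7$ pins every element of $\ker\varphi_{w\mu}$ at $\grad^a=8$; and since the maximum $\grad^a$ in $L_\mu^a$ is $7$ by the additivity of top gradings over Minkowski sums of FFLV polytopes, all of $(\widetilde D_{w\mu}^a)_8$ lies in the kernel, with $(\widetilde D_{w\mu}^a)_m=0$ for $m>8$, forcing $\ker\varphi_{w\mu}=(\widetilde D_{w\mu}^a)_8$. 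The weight assertion then follows because the two PBW filtrations on $\widetilde D_{w\mu}$ are both $T_0$-stable, so $\widetilde D_{w\mu}^a$ and $H_{w\mu}$ share $T_0$-weight multiplicities and hence $\ker\varphi_{w\mu}\cong H_{w\mu}/E_{w\mu}$ as $T_0$-modules; the latter is weight-dual to the span of $R_2,\ldots,R_6$ in $(J_{w,d})_\mu/(\initial_{\grad^a}\widetilde I_{w,d})_\mu$, whose weights are pairwise distinct.

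The main technical obstacle is the uniform $\wt$-grading verification that each of the five $R_i$ simultaneously escapes $\initial_{\grad^a}\widetilde I_{w,d}$; this is the one step that does not reduce mechanically to the $\la$-case and where some explicit, computer-verifiable case analysis is unavoidable. The Minkowski-sum computation of $|\Gamma_\mu|$ is conceptually trivial but must be executed carefully to ensure the count lands exactly at $8221$.
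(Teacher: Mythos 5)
Your plan to mirror Lemma~\ref{lalemma} by exhibiting five elements of $(J_{w,d})_\mu\setminus(\initial_{\grad^a}\widetilde I_{w,d})_\mu$ is well-motivated, and the upper bound $\dim\ker\varphi_{w\mu}\le 5$ via $|\Gamma_\mu|=8221$ is exactly the paper's first step. However, the crucial verification that $R_i=QX_i\notin\initial_{\grad^a}\widetilde I_{w,d}$ does not go through as you describe. You claim, mimicking the $\la$-case, that there are no monomials of the same $\wt$-grading and $\deg$-grading as $R_i$ divisible by $X_{1,4}$ or $X_{1,2,4,5}$, which would let you reduce to $R_i\notin\initial_{\grad^a}I_d$. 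This claim is false for $i=2$: one has $\wt(R_2)=(1,2,1,3,3,3)$ (in the notation where $\wt(X_{i_1,\ldots,i_{d_k}})$ has a $1$ in positions $i_1,\ldots,i_{d_k}$), and the monomial $X_5X_6X_{1,4}X_{2,4,5,6}X_{2,3,4,5,6}$ lies in $(R_d)_\mu$, has $\wt=(1,2,1,3,3,3)$, and is divisible by $X_{1,4}$. So the $\wt$-grading trick does not isolate $(\widetilde I_{w,d})$ from $(I_d)$ in the $\wt$-component of $R_2$, and the reduction breaks. You flag the ``uniform $\wt$-grading verification'' as the main obstacle, but you present it as a routine check that succeeds; in fact it fails, and you provide no alternative way to certify non-membership of $R_i$ in $\initial_{\grad^a}\widetilde I_{w,d}$.

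The paper takes a substantively different route for the lower bound that sidesteps this difficulty entirely: it works inside the module rather than dually in $R_d$. Concretely, after normalizing so that $q=M\widetilde v_{w\la}^a$ for a PBW monomial $M$ of $\grad^a$-degree $7$, it considers the five vectors $f_{1,j}^aM\widetilde v_{w\mu}^a$ for $j=2,\ldots,6$. That these lie in $\ker\varphi_{w\mu}$ is immediate from $(L_\mu^a)_8=0$ (the top $\grad^a$-degree for $\mu$ is $7$, as you also note). That they are \emph{nonzero} is shown by lifting $M$ to $M_0\in\mU(\fn_{-w})$, embedding $\widetilde D_{w\mu}$ into $\widetilde D_{w\om_1}\otimes\widetilde D_{w\la}$, and observing that $f_{1,j}M_0(v_{\om_1}\otimes v_\la)$ has a nonzero $e_j\otimes M_0v_\la$-component while any monomial $M'$ of PBW-degree $\le 7$ applied to $v_{\om_1}\otimes v_\la$ has zero $e_j\otimes M_0v_\la$-component. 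The five vectors lie in $(\widetilde D_{w\mu}^a)_8$ and carry distinct $T_0$-weights $w\mu-\wt(M)-\alpha_{1,j}$. This direct module-level argument avoids initial-ideal computations entirely; your dual approach, even if it could be repaired, would require either a case analysis that you have not supplied or a retreat to heavy computation.
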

\begin{proof}
First, in the spirit of the proof of Lemma~\ref{dim2941} the size of the set $\Gamma_\mu$ is computed to equal 8221. One may further compute that $\dim D_{w\mu}=8226$. We see that $\dim E_{w\mu}\ge 8221$ and $\dim(\ker\varphi_{w\la})\le 5$.

Replacing the vector $q\in\widetilde D_{w\la}^a$ with a scalar multiple of itself if necessary, we may choose a monomial $M\in\mU(\fn_{-w}^a)$ of degree 7 such that $M\widetilde v_{w\la}^a=q$. To prove the lemma it now suffices to show that the vectors $f_{1,2}^aM\widetilde v_{w\mu}^a$, $f_{1,3}^aM\widetilde v_{w\mu}^a$, $f_{1,4}^aM\widetilde v_{w\mu}^a$, $f_{1,5}^aM\widetilde v_{w\mu}^a$ and $f_{1,6}^aM\widetilde v_{w\mu}^a$ are nonzero and lie in $\ker\varphi_{w\mu}$. 

Choose a $2\le j\le 6$. First we show that $f_{1,j}^aM\widetilde v_{w\mu}^a$ is nonzero. Choose a monomial in $M_0\in\mU(\fn_{-w})$ obtained from $M$ by removing the ${}^a$ superscipts and consider the product of $\fn_{-w}$-modules $\widetilde D_{w\om_1}\otimes\widetilde D_{w\la}$. The cyclic submodule therein generated by $v_{\om_1}\otimes v_\la$ is isomorphic to $\widetilde D_\mu$. We are to show that $f_{1,j}M_0(v_{\om_1}\otimes v_\la)$ may not be expressed as a linear combination of vectors of the form $M'(v_{\om_1}\otimes v_\la)$ with $M'\in\mU(\fn_{-w})_{\le 7}$. Let us consider a basis in $\widetilde D_{w\la}$ comprised of $M_0v_\la$ and a basis in $\mU(\fn_{-w})_{\le 6}v_\la$. Together with the basis consisting of $e_k$ in $\widetilde D_{w\om_1}$ this provides a basis in the tensor product. The decomposition of $f_{1,j}M_0(v_{\om_1}\otimes v_\la)$ with respect to our basis is then seen to contain $e_j\otimes M_0v_\la$ with a nonzero coefficient. However, the decomposition of every $M'(v_{\om_1}\otimes v_\la)$ with $M'\in\mU(\fn_{-w})_{\le 7}$ contains $e_j\otimes M_0v_\la$ with a zero coefficient.

The fact that $f_{1,j}^aM\widetilde v_{w\mu}^a\in\ker\varphi_{w\mu}$ simply follows from $(E_{w\mu})_8=(L_\mu^a)_8=0$.
\end{proof}

Theorem~\ref{notsame} is now immediate.
\begin{proof}[Proof of Theorem~\ref{notsame}]
The discussion preceding Lemma~\ref{dim2941} together with Lemmas~\ref{lalemma} and~\ref{mulemma} shows that $\lim_\la(c)$ is $T$-fixed while $\lim_\mu(c)$ is not.
\end{proof}

\begin{remark}\label{fixed}
Although not immediate from the above, it is true that neither of $X_{w\la}^a$ and $X_{w\mu}^a$ coincides with $Y_{w,d}$. In fact, the rational map $\bm\varphi_{w\la}$ establishes a bijection between the $T$-fixed points in $X_{w\la}^a\backslash\bP(\ker\varphi_{w\la})$ and the $T$-fixed points in $Y_{w,d}$. The same holds for the map $\bm\varphi_{w\mu}$ and the $T$-fixed points in $X_{w\mu}^a\backslash\bP(\ker\varphi_{w\mu})$. This means that if the number of $T$-fixed points in $Y_{w,d}$ is $m$, then the number of $T$-fixed points in $X_{w\la}^a$ is $m+1$ and the number of $T$-fixed points in $X_{w\mu}^a$ is $m+5$. However, proving this requires substantially more computer assistance, for this reason the above argument was chosen.
\end{remark}

\begin{remark}
We have seen that the existing definition of abelian PBW degenerations of Schubert varieties has a significant disadvantage: these varieties do not depend on the highest weight the way one would expect, i.e.\ the way Schubert varieties do themselves. However, this dependence may still be worth an investigation. In particular, one fairly natural and interesting question arises: might it be that if we fix $d(\la)$ while letting $\la$ approach infinity in some sense, then $X_{w\la}^a$ will stabilize letting us define the PBW degeneration as this limit form?
\end{remark}


\begin{thebibliography}{}
\bibitem[ABS]{ABS} 
F. Ardila, T. Bliem, D. Salazar, {\it Gelfand--Tsetlin polytopes and Feigin-Fourier-Littelmann-Vinberg polytopes as marked poset polytopes},
Journal of Combinatorial Theory, Series A {\bf 118} (2011), no. 8, 2454--2462.

%\bibitem[Bi]{Bi}
%A. Bigeni, \emph{Enumerating the symplectic Dellac configurations}, https://arxiv.org/abs/1705.03804

\bibitem[BF]{BF}
R. Biswal, G. Fourier, \emph{Minuscule Schubert Varieties: Poset Polytopes, PBW-Degenerated Demazure Modules, and Kogan Faces}, Algebras and Representation Theory, 18 (2015), 1481--1503.

%\bibitem[BLMM]{BLMM}
%L. Bossinger, S. Lamboglia, K. Mincheva, F. Mohammadi, \emph{Computing Toric Degenerations of Flag Varieties}. In: G. Smith, B. Sturmfels (eds), \emph{Combinatorial Algebraic Geometry}, Fields Institute Communications, Vol. 80, Springer-Verlag, New York, 2017.

%\bibitem[C]{carter}
%R.~Carter, \emph{Lie Algebras of Finite and Affine Type}, Cambridge Studies in Advanced Mathematics, Vol. 96, Cambridge University Press, Cambridge, 2005.

\bibitem[CFF]{CFF}
R. Chirivi', X. Fang, G. Fourier, \emph{Degenerate Schubert Varieties in Type A}, arXiv:1808.01594.

%\bibitem[CFFFR]{CFFFR}
%G. Cerulli Irelli, X. Fang, E.Feigin, G. Fourier, M. Reineke,
%\emph{Linear degenerations of flag varieties}, Mathematische Zeitschrift, 287 (2017), 615--654.

\bibitem[CFR]{CFR} G.~Cerulli Irelli, E.~Feigin, M.~Reineke, \emph{Quiver Grassmannians and degenerate flag varieties}, 
Algebra \& Number Theory {\bf 6} (2012), no. 1, 165--194.

\bibitem[CL]{CL} 
G.~Cerulli Irelli, M. Lanini, \emph{Degenerate Flag Varieties of Type A and C are Schubert Varieties}, International Mathematics Research Notices {\bf 2015}, no. 15, 6353–6374.

%\bibitem[ChF]{CF}
%I.Cherednik, E.Feigin,
%{\emph  Extremal part of the PBW-filtration and nonsymmetric Macdonald polynomials},
%Advances in Mathematics, 282 (2015), 220--264.

\bibitem[FaFFM]{fafefom}
X. Fang, E. Feigin, G. Fourier, I. Makhlin, {\it Weighted PBW degenerations and tropical flag varieties}, Communications in Contemporary Mathematics {\bf 21} (2019), no. 1, 1850016.
%
%\bibitem[FaF]{chainorder}
%X.~Fang, G.~Fourier, \emph{Marked chain-order polytopes}, European Journal of Combinatorics {\bf 58} (2016), 267--282.
%
%\bibitem[FaFL]{essential}
%X. Fang, G. Fourier, P. Littelmann, \emph{Essential bases and toric degenerations arising from birational sequences}, Advances in Mathematics {\bf 312} (2017), 107--149.

\bibitem[Fe]{Fe}
E.~Feigin, \emph{${\mathbb G}_a^M$ degeneration of flag varieties}, 
Selecta Mathematica, New Series {\bf 18} (2012), no. 3, 513--537.

%\bibitem[FFiL]{FFiL}
%E.~Feigin, M.~Finkelberg, P.~Littelmann,
%\emph{Symplectic degenerate flag varieties}, Canadian Journal of Mathematics, 
%66 (2014), 1250--1286.

\bibitem[FFL1]{FFL1} 
E.~Feigin, G.~Fourier, P.~Littelmann,
\emph{PBW filtration and bases for irreducible modules in type ${A}_n$}, Transformation Groups {\bf 16} (2011), no. 1, 71--89.

\bibitem[FFL2]{FFL2}
E.~Feigin, G.~Fourier, P.~Littelmann,
{\it PBW-filtration and bases for symplectic Lie algebras},
International Mathematics Research Notices {\bf 2011}, no. 24, 5760--5784.

\bibitem[FFL3]{favourable}
E. Feigin, G. Fourier, P. Littelmann, \emph{Favourable modules: filtrations, polytopes, Newton-Okounkov bodies and flat degenerations}, Transformation Groups {\bf 22} (2017), no. 2, 321--352.

%\bibitem[FM]{FM}
%E.~Feigin, I.~Makhlin, \emph{Vertices of FFLV polytopes}, Journal of Algebraic Combinatorics, 45 (2017), 1083--1110.

\bibitem[Fo]{fourier}
G. Fourier, \emph{PBW-degenerated Demazure modules and Schubert varieties for triangular elements}, Journal of Combinatorial Theory, Series A {\bf 139} (2016), 132--152

\bibitem[KM]{KM}
M. Kogan, E. Miller, \emph{Toric degeneration of Schubert varieties and Gelfand–Tsetlin polytopes}, Advances in Mathematics {\bf 193} (2005), no. 1, 1--17.

%\bibitem[Ful]{fulton} 
%W.~Fulton, \emph{Young Tableaux}, London Mathematical Society Student Texts, Vol. 35, Cambridge University Press, Cambridge, 1997.
%
%\bibitem[GT]{GT}
%I. Gelfand, M. Tsetlin, {\it Finite dimensional representations of the group of 
%unimodular matrices}, Doklady Akademii Nauk USSR {\bf 71} (1950), no. 5, 825--828.
%
%\bibitem[GL]{GL}
%N. Gonciulea, V. Lakshmibai, \emph{Degenerations of flag and Schubert varieties to toric varieties}, Transformation Groups {\bf 1} (1996), no. 3, 215--248.

\bibitem[LiE]{LiE}
M. A. A. van Leeuwen, A. M. Cohen, B. Lisser, \emph{LiE, A Computer algebra package for Lie group computations}, \url{http://young.sp2mi.univ-poitiers.fr/~marc/LiE/}

\bibitem[M2]{M2}
D. Grayson, M. Stillman, \emph{Macaulay2, a software system for research in algebraic geometry}, \url{http://www.math.uiuc.edu/Macaulay2/}

\bibitem[HH]{HH}
J. Herzog, T, Hibi, \emph{Monomial Ideals}, Graduate Texts in Mathematics, Vol. 260, Springer-Verlag, London, 2011.

%\bibitem[Ki]{Ki} 
%V.~Kiritchenko, {\it Newton-Okounkov polytopes of flag varieties}, Transformation Groups,	
%22 (2017), 387--402. 

%\bibitem[KaM]{KaM}
%K. Kaveh, C. Manon, \emph{Khovanskii bases, higher rank valuations and tropical geometry}, \url{https://arxiv.org/abs/1610.00298v3}
%
%\bibitem[KM]{KM}
%M. Kogan, E. Miller, \emph{Toric degeneration of Schubert varieties and Gelfand–Tsetlin polytopes}, Advances in Mathematics {\bf 193} (2005), no. 1, 1--17.
%
%\bibitem[MaS]{MLS}
%D. MacLagan, B. Sturmfels, {\it Introduction to Tropical Geometry}, Graduate Studies in Mathematics, Vol. 161, American Mathematical Society, Providence, 2015.
%
%\bibitem[MS]{MS}
%E. Miller, B. Sturmfels, {\it Combinatorial Commutative Algebra}, Graduate Texts in Mathematics, Vol. 227, Springer-Verlag, New York, 2005.

\bibitem[S]{S}
W. Stein, \emph{SageMath}, \url{http://www.sagemath.org}
\end{thebibliography}
\end{document}